\newtheorem{theorem}{Theorem}[section]
\newtheorem{lemma}[theorem]{Lemma}
\theoremstyle{definition}
\newtheorem{definition}[theorem]{Definition}
\newtheorem{assumption}[theorem]{Assumption}
\theoremstyle{remark}
\newtheorem{remark}[theorem]{Remark}
\numberwithin{equation}{section}
\def\rr{{\mathbb R}}
\def\rn{{{\rr}^n}}
\def\zz{{\mathbb Z}}
\def\cc{{\mathbb C}}
\def\nn{{\mathbb N}}
\def\ca{{\mathcal A}}
\def\cd{{\mathcal D}}
\def\cf{{\mathcal F}}
\def\cm{{\mathcal M}}
\def\cp{{\mathcal P}}
\def\fz{\infty}
\def\az{\alpha}
\def\bz{\beta}
\def\bgz{{\Gamma}}
\def\lz{\lambda}
\def\tz{\theta}
\def\vz{\varphi}
\def\lf{\left}
\def\r{\right}
\def\hs{\hspace{0.25cm}}
\def\ls{\lesssim}
\def\noz{\nonumber}
\def\st{\subset}
\def\com{\complement}
\def\supp{\mathop\mathrm{\,supp\,}}
\def\div{\mathop\mathrm{div}}
\def\essinf{\mathop\mathrm{\,ess\,inf\,}}
\def\esup{\mathop\mathrm{\,ess\,sup\,}}
\def\vp{{L^{p(\cdot)}(\rn)}}
\def\vhp{H_L^{p(\cdot)}(\rn)}
\def\uz{\underline}
\def\dydt{\,\frac{dy\,dt}{t^{n+1}}}
\begin{document}

\setcounter{page}{1}

\title[Schr\"{o}dinger Type Operators]{Some Estimates of Schr\"{o}dinger Type Operators
on Variable Lebesgue and Hardy Spaces}

\author[J. Zhang \MakeLowercase{and} Z. Liu]
{Junqiang Zhang$^{1*}$ \MakeLowercase{and} Zongguang Liu$^{2}$}

\address{$^{1}$School of Science, China University of Mining and Technology-Beijing,
Beijing 100083, People's Republic of China}
\email{\textcolor[rgb]{0.00,0.00,0.84}{jqzhang@cumtb.edu.cn;
zhangjunqiang@mail.bnu.edu.cn}}

\address{$^{2}$School of Science, China University of Mining and Technology-Beijing,
Beijing 100083, People's Republic of China}
\email{\textcolor[rgb]{0.00,0.00,0.84}{liuzg@cumtb.edu.cn}}


\let\thefootnote\relax\footnote{Copyright 2018 by the Tusi Mathematical Research Group.}

\subjclass[2010]{Primary 42B20; Secondary 35J10, 42B35.}

\keywords{Schr\"{o}dinger type operator, reverse H\"{o}lder class,
variable Lebesgue space, variable Hardy spaces, and atom.}

\date{Received: xxxxxx; Revised: yyyyyy; Accepted: zzzzzz.
\newline \indent $^{*}$Corresponding author}

\begin{abstract}
In this article, the authors consider the Schr\"{o}dinger type
operator $L:=-\div(A\nabla)+V$ on $\mathbb{R}^n$ with $n\geq 3$,
where the matrix $A$ satisfies uniformly elliptic condition and the nonnegative potential
$V$ belongs to the reverse H\"{o}lder class $RH_q(\mathbb{R}^n)$
with $q\in(n/2,\,\infty)$.
Let $p(\cdot):\ \mathbb{R}^n\to(0,\,\infty)$ be a variable exponent function
satisfying the globally $\log$-H\"{o}lder continuous condition.
When $p(\cdot):\ \mathbb{R}^n\to(1,\,\infty)$, the authors prove that the operators
$VL^{-1}$, $V^{1/2}\nabla L^{-1}$ and $\nabla^2L^{-1}$ are
bounded on variable Lebesgue space $L^{p(\cdot)}(\mathbb{R}^n)$.
When $p(\cdot):\ \mathbb{R}^n\to(0,\,1]$, the authors introduce the
variable Hardy space $H_L^{p(\cdot)}(\mathbb{R}^n)$, associated to $L$,
and show that $VL^{-1}$, $V^{1/2}\nabla L^{-1}$ and $\nabla^2L^{-1}$ are
bounded from $H_L^{p(\cdot)}(\mathbb{R}^n)$ to $L^{p(\cdot)}(\mathbb{R}^n)$.
\end{abstract} \maketitle

\section{Introduction and main results}

Let $n\geq 3$ and consider the Schr\"{o}dinger operator $-\Delta+V$
on the Euclidean space $\rn$, where $\Delta:=\sum_{j=1}^n\frac{\partial^2}{\partial x_j^2}$
denotes the Laplacian operator on $\rn$ and $V$ is a nonnegative potential.
If $V$ is a nonnegative polynomial on $\rn$, Zhong \cite{z93} showed that
the operators $\nabla(-\Delta+V)^{-1/2}$, $\nabla^2(-\Delta+V)^{-1}$
and $\nabla(-\Delta+V)^{-1}\nabla$ are the classical Calder\'{o}n-Zygmund
operators which are bounded on $L^p(\rn)$ for any $p\in(1,\,\fz)$.

Then, in 1995, Shen \cite{sh} proved that if $V$ belongs to the
\emph{reverse H\"{o}lder class} $RH_q(\rn)$ with $q\in[n,\,\fz)$,
denoted by $V\in RH_q(\rn)$,
then $\nabla(-\Delta+V)^{-1/2}$, $(-\Delta+V)^{-1/2}\nabla$ and
$\nabla(-\Delta+V)^{-1}\nabla$ are Calder\'{o}n-Zygmund operators.
Recall that a nonnegative measurable function $V$ on $\rn$ is said to
belong to the reverse H\"older class $RH_q(\rn)$, $q\in[1,\fz]$,
if $V\in L^q_{\rm loc}(\rn)$ and there exists a positive constant $C$ such that,
for any ball $B\st\rn$,
\begin{align*}
\lf\{\frac{1}{|B|}\int_B [V(x)]^q\,dx\r\}^{1/q}\le C\frac{1}{|B|}\int_B V(x)\,dx,
\end{align*}
where we replace $\{\frac{1}{|B|}\int_B [V(x)]^q\,dx\}^{1/q}$ by
$\|V\|_{L^\infty(B)}$ when $q=\infty$.
If $V\in RH_q(\rn)$ with $q\in[n/2,\,\fz)$, Shen \cite{sh} also obtained
the $L^p(\rn)$-boundedness of $V(-\Delta+V)^{-1}$, $V^{1/2}\nabla(-\Delta+V)^{-1}$
and $\nabla^2(-\Delta+V)^{-1}$ for any $p\in(1,\,p_0)$, where $p_0\in(1,\,\fz)$ is
a constant which may depend on $n$ and $q$.
Noticing that if $V$ is a nonnegative polynomial on $\rn$,
then $V\in RH_\fz(\rn)\st RH_q(\rn)$ for any $q\in[1,\,\fz)$
(see \cite[p.\,516]{sh}), hence, Shen \cite{sh} generalized the results in \cite{z93}.
Moreover, for the weighted $L^p(\rn)$ boundedness of these operators,
we refer the reader to \cite{ly16,t15}.

For $p\in(0,\,1]$, it is well-known that many classical operators are bounded on
Hardy spaces $H^p(\rn)$, but not on $L^p(\rn)$,
for example, the Riesz transforms $\nabla(-\Delta)^{-1/2}$ and $\nabla^2(-\Delta)^{-1}$
(see \cite{s93}).
However, when working with some differential operators other than
the Laplacian operator, the classical Hardy spaces $H^p(\rn)$
are not suitable any more, since $H^p(\rn)$ is intimately connected
with the Laplacian operator.
This motivates people to develop a theory of Hardy spaces $H_L^p(\rn)$
associated with different operators $L$. This topic has attracted a lot of
attention in the last decades, which can be found in \cite{adm05,dy051,hm09,y08}.
In particular, it is showed in \cite{hlmmy11,jy11} that
the Riesz transform $\nabla(-\Delta+V)^{-1/2}$
is bounded from the Hardy space $H_{-\Delta+V}^p(\rn)$
to $L^p(\rn)$ for $p\in(0,\,1]$ and bounded on $H^p(\rn)$ for $p\in(\frac{n}{n+1},\,1]$,
where $V$ is a nonnegative potential on $\rn$.
Moreover, as a generalization of the results in \cite{sh} for $p\le 1$,
F. K. Ly \cite{ly14} proved that, for any $p\in(0,\,1]$,
the operators $\nabla^2(-\Delta+V)^{-1}$ and $V(-\Delta+V)^{-1}$
are bounded from $H_{-\Delta+V}^p(\rn)$ to $L^p(\rn)$
and, for any $p\in(\frac{n}{n+1},\,1]$, $\nabla^2(-\Delta+V)^{-1}$ is
bounded from $H_{-\Delta+V}^p(\rn)$ to $H^p(\rn)$, where $V\in RH_q(\rn)$ with
$q>\max\{n/2,\,2\}$.
Moreover, Cao et al. \cite{ccyy14} introduced Musielak-Orlicz-Hardy space
$H_{\varphi,\,-\Delta+V}(\rn)$ and, via establishing its atomic
decomposition, they obtained the boundedness of $V(-\Delta+V)^{-1}$
and $\nabla^2(-\Delta+V)^{-1}$
on $H_{\varphi,\,-\Delta+V}(\rn)$,
where $\vz:\ \rn\times[0,\fz)\to[0,\fz)$ is a Musielak-Orlicz function.
Observe that the Musielka-Orlicz-Hardy space is a more generalized
space which unifies the Hardy space, the weighted Hardy space,
the Orlicz-Hardy space and the weighted Orlicz-Hardy space.

In this paper, we consider the Schr\"{o}dinger type operator
\begin{equation}\label{eq o}
L:=-\div(A\nabla)+V\ \ \text{on}\ \ \rn,\,n\geq 3,
\end{equation}
where $V$ is a nonnegative potential and $A:=\{a_{ij}\}_{1\le i,j\le n}$
is a matrix of measurable functions satisfying the following conditions:
\begin{assumption}\label{as 1}
There exists a constant $\lz\in(0,1]$ such that,
for any $x,\,\xi\in\rn$,
$$a_{ij}(x)=a_{ji}(x)\quad \text{and}\quad
\lz|\xi|^2\le\sum_{i,j=1}^n a_{ij}(x)\xi_i\xi_j\le\lz^{-1}|\xi|^2.$$
\end{assumption}

\begin{assumption}\label{as 2}
There exist constants $\alpha\in(0,1]$ and $K\in(0,\,\fz)$ such that,
for any $i,j\in\{1,\ldots,n\}$,
$$\|a_{ij}\|_{C^\az(\rn)}\le K,$$
where $C^\az(\rn)$ denotes the set of all functions $f$
satisfying the \emph{$\alpha$-H\"{o}lder contition}
$$\|f\|_{C^\az(\rn)}:=\sup_{x,\,y\in\rn,\,x\neq y}\frac{|f(x)-f(y)|}{|x-y|^\az}<\fz.$$
\end{assumption}

\begin{assumption}\label{as 3}
There exists a constant $\alpha\in(0,1]$ such that,
for any $i,j\in\{1,\ldots,n\}$, $x\in\rn$ and $z\in\zz^n$,
$$a_{ij}\in C^{1+\az}(\rn),\quad a_{ij}(x+z)=a_{ij}(x)\quad\text{and}\quad
\sum_{k=1}^n\partial_k(a_{ij}(\cdot))(x)=0.$$
\end{assumption}

For the Schr\"{o}dinger type operator $L=-\div(A\nabla)+V$,
under the assumption that $V\in RH_\fz(\rn)$ and
$A$ satisfies some of Assumptions \ref{as 1}, \ref{as 2} and \ref{as 3} below,
Kurata and Sugano \cite{ks00} established the boundedness of
$VL^{-1}$, $V^{1/2}\nabla L^{-1}$ and $\nabla^2L^{-1}$ on the
weighted Lebesgue spaces and Morrey spaces.
Recently, motivated by \cite{ccyy14} and \cite{ks00},
Yang \cite{y14} considered the boundedness of $VL^{-1}$,
$V^{1/2}\nabla L^{-1}$ and $\nabla^2L^{-1}$ on the Musielak-Orlicz-Hardy space
$H_{\varphi,\,L}(\rn)$ associated with $L$.

On the other hand, it is well known that
the \emph{variable Lebesgue space $\vp$}
is a generalization of classical Lebesgue space,
via replacing the constant exponent $p$ by a variable exponent function
$p(\cdot):\ \rn\to(0,\,\fz)$,
which consists of all measurable functions $f$ on $\rn$ such that,
for some $\lz\in(0,\,\fz)$,
$\int_\rn [|f(x)|/\lz]^{p(x)}\,dx<\fz$,
equipped with the \emph{Luxemburg} (or known as the \emph{Luxemburg-Nakano}) {(quasi-)norm}
\begin{equation}\label{eq norm}
\|f\|_{\vp}:=\inf\lf\{\lz\in(0,\,\fz):\ \int_\rn
\lf[\frac{|f(x)|}{\lz}\r]^{p(x)}\,dx\le 1\r\}.
\end{equation}
The study of $L^p(\rn)$ originated
from Orlicz \cite{or31} in 1931.
Since then, much attention are paid to the
study of variable function spaces.
For a detailed history of this topic, we
refer the reader to the monographs \cite{cf13,dhhr11}.
As a generalized of the classical Hardy space,
the variable Hardy space $H^{p(\cdot)}(\rn)$ is
naturally considered and becomes
an active research topic in harmonic analysis, see, for example,
\cite{cw14,ns12,s13}.
Very recently, Yang et al. \cite{yz17,yzo15}
studied the variable Hardy spaces associated with different operators.

Let $L=-\div(A\nabla)+V$ be as in \eqref{eq o},
where $V$ is a nonnegative potential on $\rn$
with $n\geq 3$ and belongs to the reverse H\"{o}lder class
$RH_q(\rn)$ for some $q\in(n/2,\fz)$.
In this article, motivated by \cite{ly14,y14,yz17}, we consider the boundedness of
$VL^{-1}$, $V^{1/2}\nabla L^{-1}$ and $\nabla^2 L^{-1}$ on
variable Lebesgue spaces $L^{p(\cdot)}(\rn)$ and variable Hardy spaces $H_L^{p(\cdot)}(\rn)$
associated with $L$ (see Definition \ref{def vhp} below).
To state the main results, we first
recall some notation and definitions.

Let $\cp(\rn)$ be the set of all the measurable functions $p(\cdot):\ \rn\to (0,\,\fz)$
satisfying
\begin{equation}\label{eq var}
p_-:=\essinf_{x\in\rn}p(x)>0\ \ \text{and}\ \ p_+:=\esup_{x\in\rn}p(x)<\fz.
\end{equation}
A function $p(\cdot)\in\cp(\rn)$ is called a \emph{variable exponent function on $\rn$}.
For any $p(\cdot)\in\cp(\rn)$ with $p_-\in(1,\,\fz)$,
we define $p'(\cdot)\in\cp(\rn)$ by
\begin{align}\label{eq dual}
\frac{1}{p(x)}+\frac{1}{p'(x)}=1\ \ \ \text{for all}\ \ x\in\rn.
\end{align}
The function $p'$ is called the \emph{dual variable exponent} of $p$.

Recall that a measurable function $g\in\cp(\rn)$ is said to be
\emph{globally $\log$-H\"{o}lder continuous},
denoted by $g\in C^{\log}(\rn)$, if there exist constants $C_1,\,C_2\in(0,\,\fz)$
and $g_\fz\in\mathbb{R}$ such that, for any $x,\,y\in\rn$,
\begin{equation*}
|g(x)-g(y)|\le \frac{C_1}{\log(e+1/|x-y|)}
\end{equation*}
and
\begin{equation*}
|g(x)-g_\fz|\le \frac{C_2}{\log(e+|x|)}.
\end{equation*}

The following theorem is the first main result of
this article, which establishes the boundedness of
$VL^{-1}$, $V^{1/2}\nabla L^{-1}$ and $\nabla^2L^{-1}$ on $\vp$.
\begin{theorem}\label{thm-3}
Let $p(\cdot)\in C^{\log}(\rn)$, $L$ be as in \eqref{eq o} and $V\in RH_q(\rn)$ with
$q\in(n/2,\,\fz)$.
\begin{enumerate}
\item[(i)] Assume that $A$ satisfies Assumption \ref{as 1}. If $1<p_-\le p_+<q$,
then there exists a positive constant $C$ such that, for any $f\in\vp$,
\begin{align}\label{eq 2.14}
\lf\|VL^{-1}(f)\r\|_{\vp}\le C\|f\|_{\vp}.
\end{align}

\item[(ii)] Assume that $A$ satisfies Assumptions \ref{as 1}
and \ref{as 2}. If $1<p_-\le p_+<p_0$, where
\begin{equation}\label{eq 4.z}
p_0:=\left\{
\begin{aligned}
\frac{2qn}{3n-2q}\quad &\text{if}\ q\in(n/2,n);\\
2q\qquad\quad &\text{if}\ q\in[n,\,\fz),\\
\end{aligned}
\right.
\end{equation}
then there exists a positive constant $C$ such that, for any $f\in\vp$,
\begin{align}\label{eq 2.13}
\lf\|V^{1/2}\nabla L^{-1}(f)\r\|_{\vp}\le C\|f\|_{\vp}.
\end{align}

\item[(iii)]
Assume that $A$ satisfies Assumptions \ref{as 1}, \ref{as 2} and \ref{as 3}.
If $1<p_-\le p_+<q$, then there exists a positive constant $C$ such that, for any $f\in\vp$,
\begin{align*}
\lf\|\nabla^2L^{-1}(f)\r\|_{\vp}\le C\|f\|_{\vp}.
\end{align*}
\end{enumerate}
\end{theorem}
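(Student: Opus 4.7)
The strategy is to derive the three variable-Lebesgue bounds from classical weighted $L^p$ boundedness by means of a limited-range Rubio-de-Francia-type extrapolation adapted to the variable-exponent setting. This bypasses working directly with the Luxemburg quasinorm \eqref{eq norm} and isolates the operator-theoretic content in a single weighted estimate at one fixed classical exponent.

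The first step is to establish, for each operator $T\in\{VL^{-1},\,V^{1/2}\nabla L^{-1},\,\sr\}$, a weighted inequality of the form
\begin{equation*}
\|Tf\|_{L^{p_1}_w(\rn)}\le C\|f\|_{L^{p_1}_w(\rn)}
\end{equation*}
at one fixed exponent $p_1$ interior to the admissible range (for instance $p_1=2$) and for every weight $w$ in the limited-range Muckenhoupt class $A_{p_1/r_-}\cap RH_{(r_+/p_1)'}$, where $(r_-,r_+)=(1,q)$ in parts (i) and (iii) and $(r_-,r_+)=(1,p_0)$ in part (ii). Such weighted estimates for the operator $L=-\div(A\nabla)+V$ are essentially contained in or are a standard adaptation of the work of Kurata and Sugano \cite{ks00} together with Shen's sharp-function scheme \cite{sh}; the three regularity hypotheses on $A$ enter at exactly the expected places, namely Assumption \ref{as 1} for the $L^2$ theory and heat-kernel bounds of $L$, Assumption \ref{as 2} for the gradient estimate on $\nabla L^{-1}$ that is needed in (ii), and Assumption \ref{as 3} for the pointwise second-derivative estimate on $L^{-1}$ that is needed in (iii).

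The second step is to appeal to the limited-range extrapolation theorem for variable Lebesgue spaces (in the spirit of Cruz-Uribe and Fiorenza; see \cite{cf13,dhhr11}): if $T$ satisfies the above weighted inequality with constants depending only on $[w]$, then for every $p(\cdot)\in C^{\log}(\rn)$ with $r_-<p_-\le p_+<r_+$ the operator $T$ extends boundedly to $\vp$. Inserting $(r_-,r_+)=(1,q)$ produces parts (i) and (iii), while $(r_-,r_+)=(1,p_0)$ produces part (ii), matching precisely the ranges stated in Theorem \ref{thm-3}.

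The main obstacle lies in Step 1, namely securing the weighted $L^{p_1}_w$ bound with the sharp limited-range weight class. For $VL^{-1}$ and $\sr$, the upper endpoint $q$ is dictated by the reverse-H\"older integrability $V\in L^q_{\loc}(\rn)$: a local decomposition coupled with Shen's sharp-function argument, transplanted from $-\Delta+V$ to $-\div(A\nabla)+V$ via the resolvent kernel bounds of \cite{ks00,y14}, handles both cases. Case (ii) is the genuinely delicate one, as the threshold $p_0$ of \eqref{eq 4.z} is produced by a Sobolev-type interpolation between a straightforward $L^2$ estimate for $V^{1/2}\nabla L^{-1}$ and the gradient estimate $\|\nabla L^{-1}f\|_{L^{\bar p}}\lesssim\|f\|_{L^p}$ (with $\bar p=np/(n-p)$ when $p<n$), and Assumption \ref{as 2} is precisely what keeps the latter estimate alive after the perturbation by $V$. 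Once Step 1 is in place, the conclusion in all three cases follows mechanically from the extrapolation machinery.
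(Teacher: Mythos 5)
Your proposal is correct in spirit but follows a genuinely different route from the paper, and it silently depends on some machinery that is not actually in play there. The paper does not prove weighted $L^p$ inequalities for $VL^{-1}$, $V^{1/2}\nabla L^{-1}$ or $\nabla^2 L^{-1}$ and then extrapolate. Instead, for parts (i) and (ii) it uses duality together with Kurata--Sugano's \emph{pointwise} control of the dual operators by a power of the Hardy--Littlewood maximal function (Lemma \ref{lem k-3}): $|(VL^{-1})^\ast f|\lesssim[\cm(|f|^{q'})]^{1/q'}$ and similarly for $(V^{1/2}\nabla L^{-1})^\ast$ with $p_0'$. Writing $\|(VL^{-1})^\ast g\|_{L^{p'(\cdot)}}\sim\|\cm(|g|^{q'})\|_{L^{p'(\cdot)/q'}}^{1/q'}$ and invoking Lemma \ref{lem 1-3} for the maximal operator on $L^{p'(\cdot)/q'}(\rn)$ --- which is legal exactly when $p_+<q$, i.e. $p'_->q'$ --- gives (i) directly and (ii) by the same argument with $p_0$ in place of $q$. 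Extrapolation only appears in part (iii), and then only in the plain $A_1$ form (Lemma \ref{lem extro}) for the second-order operator $\nabla^2 L_0^{-1}$ associated to the potential-free $L_0=-\div(A\nabla)$, which is bounded on $L^p(w)$ for all $w\in A_p$; the range restriction $p_+<q$ in (iii) then comes from the factorization $\nabla^2 L^{-1}=\nabla^2 L_0^{-1}(I-VL^{-1})$ and part (i), not from any limited-range weight class.

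Your version, by contrast, would rest on two things you do not establish: (a) weighted $L^{p_1}_w$ boundedness of each of the three Schr\"odinger-type operators for $w$ in a limited-range class $A_{p_1/r_-}\cap RH_{(r_+/p_1)'}$, and (b) a \emph{limited-range} extrapolation theorem to variable exponents. Neither is in the references the paper uses: Lemma \ref{lem extro} from \cite{cfmp06} is the one-sided $A_1$ version and can only impose $p_->p_0$, so it cannot by itself produce the crucial upper restriction $p_+<q$ (resp.\ $p_+<p_0$). The appropriate two-sided variable-exponent extrapolation does exist in the literature, and your Step~1 is plausible from the sharp maximal-function estimates underlying \cite{ks00,sh}, so the route is not wrong --- but as written it trades the paper's elementary duality-plus-maximal-function argument for heavier, less self-contained machinery and leaves the key weighted bound as an assertion. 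If you want to run the extrapolation program, you should at least state the precise limited-range variable-exponent theorem being invoked (with a reference) and verify that the pointwise bounds of Lemma \ref{lem k-3} do indeed yield the claimed weighted $L^{p_1}_w$ inequalities for the required weight class. It would also be cleaner to note, as the paper does, that once (i) is available, part (iii) reduces to the potential-free $\nabla^2 L_0^{-1}$, for which plain $A_p$ weights and $A_1$ extrapolation suffice, so the limited-range apparatus is not needed there.
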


\begin{remark}
In particular, if $p(\cdot)\equiv p$ is a constant exponent,
then Theorem \ref{thm-3} coincides with \cite[Lemmas 3.2 and 3.3]{y14}.
\end{remark}

The proof of Theorem \ref{thm-3} is in Subsection \ref{s-4.1}.
We prove it by making use of some known results in \cite{ks00}, the fact that
the Hardy-Littlewood operator is bounded on $\vp$ (see Lemma \ref{lem 1-3} below)
and the extrapolation theorem for $\vp$ (see Lemma \ref{lem extro} below).

The \emph{quadratic operator} $S_L$, associated to $L$, is defined by
setting, for any $f\in L^2(\rn)$ and $x\in\rn$,
\begin{equation}\label{eq quadratic}
S_L(f)(x):=\lf[\iint_{\bgz(x)}\lf|t^2Le^{-t^2L}(f)(y)\r|^2\dydt\r]^{1/2},
\end{equation}
where $\Gamma(x):=\{(y,\,t)\in\rn\times(0,\,\fz):\ |y-x|<t\}$
denotes the cone with vertex $x\in\rn$.

\begin{definition}\label{def vhp}
Let $p(\cdot)\in \cp(\rn)$ satisfy $p_+\in(0,\,1]$
and $L$ be an operator as in \eqref{eq o}.
The \emph{variable Hardy space} $H_{L}^{p(\cdot)}(\rn)$ is defined as the
completion of the space
\begin{align*}
\lf\{f\in L^2(\rn):\ \|S_L(f)\|_{\vp}<\fz\r\}
\end{align*}
with respect to the \emph{quasi-norm} $\|f\|_{H_L^{p(\cdot)}(\rn)}:=\|S_L(f)\|_{\vp}$.
\end{definition}

The following theorem establishes the boundedness of $VL^{-1}$, $V^{1/2}\nabla L^{-1}$ and
$\nabla^2L^{-1}$ on $\vhp$.
\begin{theorem}\label{thm-4}
Let $L$ be as in \eqref{eq o} with $A$ satisfying Assumptions \ref{as 1}, \ref{as 2}
and \ref{as 3}. Assume that $V\in RH_q(\rn)$ with $q\in(\max\{n/2,2\},\fz)$
and $p(\cdot)\in C^{\log}(\rn)$.
If $0<p_-\le p_+\le 1$, then there exists a positive constant $C$ such that,
for any $f\in\vhp$,
\begin{equation}\label{eq-t2}
\lf\|VL^{-1}(f)\r\|_{\vp}\le C\|f\|_{\vhp},
\end{equation}
\begin{equation}\label{eq-t3}
\lf\|V^{1/2}\nabla L^{-1}(f)\r\|_{\vp}\le C\|f\|_{\vhp}
\end{equation}
and
\begin{equation}\label{eq-t1}
\lf\|\nabla^2L^{-1}(f)\r\|_{\vp}\le C\|f\|_{\vhp}.
\end{equation}
\end{theorem}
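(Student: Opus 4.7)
The plan is to prove Theorem \ref{thm-4} by developing a $\size$-atomic decomposition of $\vhp$, reducing the boundedness to uniform atom-wise estimates, and summing these with the help of the $\vp$-boundedness of the Hardy--Littlewood maximal operator. First, I would introduce $\size$-atoms as functions of the form $a=L^Mb$ where $\supp(L^kb)\subset B$ for $k=0,1,\dots,M$ and
\[
\lf\|(r_B^2L)^kb\r\|_{L^2(\rn)}\le r_B^{2M}|B|^{1/2}\lf\|\chi_B\r\|_{\vp}^{-1},\qquad k=0,1,\dots,M,
\]
and then, following the variable-exponent tent-space approach of Yang--Zhuo \cite{yz17}, show that every $f\in\vhp\cap L^2(\rn)$ decomposes as $f=\sum_j\lz_ja_j$ in $L^2(\rn)$, where each $a_j$ is a $\size$-atom supported in a ball $B_j$ and
\[
\lf\|\lf\{\sum_j\lf[\frac{|\lz_j|\chi_{B_j}}{\|\chi_{B_j}\|_{\vp}}\r]^{\uz p}\r\}^{1/\uz p}\r\|_{\vp}\ls\|f\|_{\vhp}
\]
for some $\uz p\in(0,p_-)$. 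The parameter $M$ will be chosen large, depending on $n,p_-,p_+,q$, in the next step.

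Second, for each $T\in\{VL^{-1},V^{1/2}\nab L^{-1},\nab^2L^{-1}\}$ and each $\size$-atom $a$ attached to a ball $B=B(x_0,r_B)$, I would establish an annular estimate
\[
\lf\|T(a)\r\|_{L^{p_0}(\ujb)}\ls 2^{-j\sz}|2^jB|^{1/p_0}\lf\|\chi_B\r\|_{\vp}^{-1},\qquad j\in\nn,
\]
where $U_1(B):=4B$ and $U_j(B):=2^{j+1}B\setminus 2^jB$ for $j\ge 2$, and where $\sz$ grows with $M$. The case $j=1$ reduces to the $L^{p_0}$-boundedness of $T$ from Theorem \ref{thm-3}, combined with the $L^2$ normalization $\|a\|_{L^2(\rn)}\ls|B|^{1/2}\|\chi_B\|_{\vp}^{-1}$. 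For $j\ge 2$ I would combine the heat-semigroup representation of $L^M$, the off-diagonal (Gaffney--Davies) estimates for $\{e^{-t^2L}\}_{t>0}$, and the pointwise kernel estimates for $T$ itself from Shen \cite{sh}, Kurata--Sugano \cite{ks00} and Ly \cite{ly14}, available under Assumptions \ref{as 1}, \ref{as 2} and \ref{as 3}. The hypothesis $q>\max\{n/2,2\}$ enters precisely to secure the required second-order Green's function bounds for $\nab^2L^{-1}$.

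Third, to pass to the global $\vp$-estimate, the annular bound combined with H\"older's inequality and the lower bound $\cm(\chi_{B_j})(x)\gs 2^{-kn}$ for $x\in U_k(B_j)$ produces a pointwise domination
\[
|T(a_j)(x)|\chi_{U_k(B_j)}(x)\ls 2^{-k\sz'}\|\chi_{B_j}\|_{\vp}^{-1}\lf[\cm(\chi_{B_j})(x)\r]^{n/(\uz p\,p_0)}
\]
for all $k\in\nn$, valid after possibly shrinking $\sz$ by a fixed amount to obtain $\sz'>0$. Summing the geometric series in $k$, then over the atomic decomposition, and invoking the $\vp$-boundedness of $\cm$ (Lemma \ref{lem 1-3}) together with the variable-exponent Fefferman--Stein vector-valued inequality, one obtains $\|T(f)\|_{\vp}\ls\|f\|_{\vhp}$ on the dense subspace $\vhp\cap L^2(\rn)$, which extends to all of $\vhp$ by continuity, yielding \eqref{eq-t2}, \eqref{eq-t3} and \eqref{eq-t1}.

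The principal obstacle is the second step: the decay exponent $\sz$ produced by the kernel bounds must outpace the tail growth $\|\chi_{2^jB}\|_{\vp}/\|\chi_B\|_{\vp}\sim 2^{jn/p_-}$ intrinsic to the variable-exponent norm, which forces a careful iteration of the Shen--Kurata--Sugano potential and gradient estimates with $M$ taken sufficiently large (essentially $M>\frac{n}{2}(\frac{1}{p_-}-\frac{1}{p_0})$). Once this decay is secured, the variable-exponent summation is essentially routine given the mature $\vp$-boundedness theory for the maximal operator.
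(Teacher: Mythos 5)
Your overall scheme --- atomic decomposition from \cite{yz17}, annular estimates on atoms, then summation via the $\vp$-boundedness of $\cm$ and a Fefferman--Stein-type argument --- matches the architecture of the paper's proof (the summation step is packaged in the paper as Lemma \ref{lem 4.2}, borrowed from \cite{yzz15}). However, your step~2, the annular decay estimate, is the technical heart of the proof and is not actually obtainable by invoking ``pointwise kernel estimates for $T$ itself from Shen, Kurata--Sugano and Ly.'' Those references treat $L=-\Delta+V$ (Shen, Ly) or give weighted $L^p$ operator bounds without pointwise control of the second-order Green's function (Kurata--Sugano). For the general elliptic operator $L=-\div(A\nabla)+V$, what the annular estimate genuinely requires is a \emph{weighted} $L^p$ estimate on $\nabla_x^2K_t(x,y)$ (and on $V(x)K_t(x,y)$ and $[V(x)]^{1/2}\nabla_xK_t(x,y)$), namely
\[
\lf[\int_\rn\lf|\nabla_x^2\frac{\partial^k}{\partial t^k}K_t(x,y)\r|^p e^{\az|x-y|^2/t}\,dx\r]^{1/p}
\ls \frac{1}{t^{1+k+n/(2p')}}\lf[1+\sqrt t\,m(y,V)\r]^{-N},
\]
which is the paper's Lemma \ref{lem 4.1}. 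That lemma does not come for free: it rests on a coercive inequality (Lemma \ref{lem k-1}),
\[
\lf\|\phi\lf|\nabla^2f\r|\r\|_{L^p(\rn)}\ls\lf\|f\lf|\nabla^2\phi\r|\r\|_{L^p(\rn)}
+\big\||\nabla f||\nabla\phi|\big\|_{L^p(\rn)}+\lf\|\phi L_0(f)\r\|_{L^p(\rn)},
\]
whose proof uses the Avellaneda--Lin $L^p$-boundedness of $\nabla^2L_0^{-1}$ and, crucially, Assumption \ref{as 3} (to kill the first-order term in $L_0(\phi)$). Your proposal never identifies this step, and Davies--Gaffney estimates for $e^{-t^2L}$ only control $L^2$ off-diagonal decay of the semigroup itself, not of $\nabla^2e^{-tL}$ or $V^{1/2}\nabla e^{-tL}$; so as written, step~2 has a genuine gap for the operators $\nabla^2L^{-1}$ and $V^{1/2}\nabla L^{-1}$.

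Two further remarks. Your annular norm is taken in $L^{p_0}$; if $p_0$ denotes the exponent from \eqref{eq 4.z}, then $p_0>2$ whenever $q>\max\{n/2,2\}$, and the near-field estimate ($j=1$ in your notation) breaks down because a $(p(\cdot),M)_L$-atom is only normalized in $L^2$, so $\|a\|_{L^{p_0}}$ cannot be controlled by H\"older. You should take the annular norm in $L^2$ --- the assumption $q>\max\{n/2,2\}$ is there precisely to make $VL^{-1}$, $V^{1/2}\nabla L^{-1}$ and $\nabla^2L^{-1}$ bounded on $L^2(\rn)$ and to make the atomic series converge in $L^2(\rn)$. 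Finally, your constraint on $M$ should be $M>\frac n2(\frac1{p_-}-\frac12)$ to match the threshold $\theta>n(\frac1{p_-}-\frac12)$ in the summation lemma.
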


\begin{remark}\label{rem-7}
\begin{enumerate}
\item[(i)]
Noticing that $n\geq 3$,
we point out that the assumption $q\in(\max\{n/2,2\},\fz)$ guarantees that
$q>2$ and $p_0>2$, where $p_0$ is as in \eqref{eq 4.z}. By this and
Theorem \ref{thm-3} (or \cite[Lemmas 3.2 and 3.3]{y14}),
we know that $VL^{-1}$, $V^{1/2}\nabla L^{-1}$ and $\nabla^2L^{-1}$ are all bounded on $L^2(\rn)$.
Based on this, we prove Theorem \ref{thm-4} by estimating these operators acting on the
single atom appearing in the atomic decomposition of $\vhp$ (see \eqref{eq atom} below).
Observing that the atomic sums in \eqref{eq atom} converge in $L^2(\rn)$,
this is why we assume $q\in(\max\{n/2,2\},\fz)$ here.

\item[(ii)] When $p(\cdot)=p\in(0,1]$ is a constant exponent
and $L=-\Delta+V$ is the Schr\"{o}dinger operator,
\eqref{eq-t2} and \eqref{eq-t1} coincide with \cite[Theorem 1.2(a)]{ly14}.
However, \eqref{eq-t3} is new.
\end{enumerate}
\end{remark}

The proof of Theorem \ref{thm-4} is in Subsection \ref{s-4.2}.
We prove it by borrowing some ideas from
\cite{ly14,ly16} and using some skills from \cite{yz17}.
The key to the proof is to establish some weighted
estimates of the spatial derivatives of the heat kernel of $\{e^{-tL}\}_{t\geq 0}$
(see Lemma \ref{lem 4.1} below).
The proof of Lemma \ref{lem 4.1} relies on the
upper bound of the heat kernel of $\{e^{-tL}\}_{t\geq 0}$ (see Lemma \ref{lem 1-2} below)
and the inequality \eqref{eq k-1} in Lemma \ref{lem k-1},
which is, in a sense, based on the known $L^p(\rn)$-boundedness of
$\nabla^2L_0^{-1}$ with $L_0:=-\div(A\nabla)$ (see \cite[Theorem B]{al91}).
We prove \eqref{eq  k-1} by applying the method used in the proof of
\cite[Lemma 2.7]{ly16}. However, to overcome the
difficulty caused by the elliptic operator $L_0=-\div(A\nabla)$ in \eqref{eq k-1},
we need to assume that the matrix $A$ satisfies the Assumption \ref{as 3},
which plays an essentially key role in
the proof of \eqref{eq k-1} as it does in the proof of \cite[Theorem B]{al91}.

We end this section by making some conventions on notation.
In this article,
we denote by $C$ a positive constant which is independent of the main parameters,
but it may vary from line to line.
We also use $C_{(\az, \bz,\ldots)}$ to denote a positive constant depending on
the parameters $\az$, $\bz$, $\ldots$.
The \emph{symbol $f\ls g$} means that $f\le Cg$.
If $f\ls g$ and $g\ls f$,  we then write $f\sim g$.
Let $\nn:=\{1,\,2,\,\ldots\}$, $\zz_+:=\nn\cup\{0\}$
and $\vec{0}_n:=(0,\,\ldots,\,0)\in\rn$.
For any measurable subset $E$ of $\rn$, we denote by $E^\com$ the
\emph{set $\rn\setminus E$}.
For any $p\in(0,\,\fz)$ and any measurable subset $E$ of $\rn$,
let $L^p(E)$ be the set of all measurable functions $f$ on $\rn$ such that
$$\|f\|_{L^p(E)}:=\lf[\int_E |f(x)|^p\,dx\r]^{1/p}<\fz.$$
For any $k\in\nn$ and $p\in(1,\,\fz)$, denote by $W^{k,\,p}(\rn)$
the usual \emph{Sobolev space} on $\rn$ equipped with the norm
$(\sum_{|\az|\le k}\|D^\az f\|_{L^p(\rn)}^p)^{1/p}$,
where $\az\in\zz_+^n$ is a multiindex and $D^\az f$ denotes the
\emph{distributional derivatives} of $f$.
We also denote by $C_c^\fz(\rn)$ the set of all infinitely differential functions with compact supports.
For any $r\in\mathbb{R}$, the \emph{symbol} $\lfloor r\rfloor$
denotes the largest integer $m$ such that $m\le r$.
For any $\mu\in(0,\,\pi)$, let
\begin{align}\label{eq 0.1}
\Sigma_\mu^0:=\lf\{z\in\cc\setminus\{0\}:\ |\arg z|<\mu\r\}.
\end{align}
For any ball
$B:=B(x_B,r_B):=\{y\in\rn:\ |x-y|<r_B\}\st\rn$
with $x_B\in\rn$ and $r_B\in(0,\,\fz)$, $\az\in(0,\,\fz)$ and $j\in\nn$,
we let $\az B:=B(x_B,\az r_B)$,
\begin{align}\label{eq-ujb}
U_0(B):=B\ \ \ \text{and}\ \ \ U_j(B):= (2^jB)\setminus (2^{j-1}B).
\end{align}
For any $p\in[1,\,\fz]$, $p'$ denotes its conjugate number,
namely, $1/p+1/p'=1$.

\section{Preliminaries}\label{s2}

In this section, we recall some notions and
results on the Schr\"{o}dinger type operator $L=-\div(A\nabla)+V$
and variable Lebesgue space $\vp$.

We first recall the definition of the auxiliary function $m(\cdot,V)$
introduced by Shen \cite[Definition 2.1]{sh} and its properties.
Let $V\in RH_q(\rn)$, $q\in(n/2,\,\fz)$, and $V\not\equiv 0$.
For any $x\in\rn$, the auxiliary function $m(x,\,V)$ is defined by
\begin{equation}\label{eq aux}
\frac{1}{m(x,\,V)}:=\sup\lf\{r\in(0,\,\fz):\ \frac{1}{r^{n-2}}\int_{B(x,r)}V(y)\,dy\le 1\r\}.
\end{equation}

For the auxiliary function $m(\cdot,V)$, we have the following Lemma \ref{lem aux1},
which is proved in \cite[Lemmas 1.2 and 1.8]{sh}.

\begin{lemma}[\cite{sh}]\label{lem aux1}
Let $m(\cdot,V)$ be as in \eqref{eq aux} and $V\in RH_q(\rn)$ with $q\in(n/2,\,\fz)$.
\begin{enumerate}
\item[(i)] Then there exist a positive constant
$C$ such that, for any $x\in\rn$ and $0<r<R<\fz$,
\begin{equation*}
\frac{1}{r^{n-2}}\int_{B(x,r)}V(y)\,dy
\le C\lf(\frac{R}{r}\r)^{\frac{n}{q}-2}
\frac{1}{R^{n-2}}\int_{B(x,R)}V(y)\,dy.
\end{equation*}

\item[(ii)] There exist positive constants $C$ and $k_0$ such that,
for any $x\in\rn$ and $R\in(0,\,\fz)$, if $Rm(x,\,V)\geq1$, then
\begin{align*}
\frac{1}{R^{n-2}}\int_{B(x,R)}V(y)\,dy\le C\lf[Rm(x,\,V)\r]^{k_0}.
\end{align*}
\end{enumerate}
\end{lemma}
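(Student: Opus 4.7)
The plan is to handle the two parts of the lemma separately, using only the reverse H\"{o}lder condition and elementary growth properties of the potential~$V$. For part~(i), I would apply H\"{o}lder's inequality and the reverse H\"{o}lder condition on nested balls: since $V\in RH_q(\rn)$, H\"{o}lder's inequality on the smaller ball gives
$$\int_{B(x,r)}V(y)\,dy\le|B(x,r)|^{1-1/q}\lf(\int_{B(x,r)}[V(y)]^q\,dy\r)^{1/q}.$$
Next I would enlarge the domain of integration on the right-hand side to $B(x,R)$ (by monotonicity, since $V\ge 0$) and invoke the reverse H\"{o}lder inequality on $B(x,R)$ to bound $(|B(x,R)|^{-1}\int_{B(x,R)}V^q)^{1/q}$ by $C|B(x,R)|^{-1}\int_{B(x,R)}V$. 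Tracking the volume factors $|B(x,r)|^{1-1/q}|B(x,R)|^{1/q-1}\sim(r/R)^{n-n/q}$ and dividing by $r^{n-2}$ produces the factor $(R/r)^{n/q-2}$, giving the claimed inequality as a short direct computation.

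For part~(ii), the strategy is to first establish a dyadic-scale oscillation bound for the function $f(R):=R^{-(n-2)}\int_{B(x,R)}V$ and then iterate. The key input is that $V\in RH_q(\rn)$ implies $V\in A_\fz(\rn)$, so the measure $V\,dy$ is doubling: $V(B(x,2R))\le CV(B(x,R))$ with a constant independent of $x$ and $R$. Combining this with the scaling of $R^{-(n-2)}$ yields $f(2R)\le\gamma f(R)$ with $\gamma:=C\cdot 2^{-(n-2)}$, while the trivial bound $V(B(x,R))\le V(B(x,2R))$ also gives $f(R)\le 2^{n-2}f(2R)$. By the very definition of the auxiliary function, at the scale $r_0:=1/m(x,V)$ one has $f(r_0)\sim 1$. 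Iterating the doubling-type bound gives $f(2^kr_0)\le C\gamma^k$ for every $k\in\zz_+$. For any $R$ with $Rm(x,V)\ge 1$, choosing the integer $k$ with $2^{k-1}r_0\le R\le 2^kr_0$ and using $f(R)\le 2^{n-2}f(2R)$ produces $f(R)\ls\gamma^k\ls[Rm(x,V)]^{\log_2\gamma}$, so setting $k_0:=\max\{1,\log_2\gamma\}$ yields the claimed inequality.

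The main obstacle I anticipate is the doubling assertion $V(B(x,2R))\le CV(B(x,R))$. The reverse H\"{o}lder inequality does not obviously supply this estimate in the direction we need (bounding the integral of $V$ on a smaller ball by that on a larger one is trivial, but the converse is what is required), so one must invoke the classical implication $RH_q\Rightarrow A_\fz$ (equivalently, realize $V$ as an $A_p$-weight for some $p$ depending on $q$ and the reverse H\"{o}lder constant) and then exploit the doubling property of $A_p$-weights. Once this fact is available, the rest of the argument is a routine dyadic iteration.
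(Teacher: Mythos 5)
Your proposal is correct, and it follows the same route that Shen uses in \cite{sh} (the paper itself simply cites \cite[Lemmas 1.2 and 1.8]{sh} and gives no proof). Part (i) is precisely Shen's H\"older-plus-reverse-H\"older computation with the volume factors tracked as you did. For part (ii), Shen likewise reduces the growth estimate to the fact that $V\,dy$ is a doubling measure and then iterates dyadically from the critical scale $r_0=1/m(x,V)$, at which $f(r_0)\le 1$ holds by the definition \eqref{eq aux} and the continuity of $r\mapsto\int_{B(x,r)}V$. Two small remarks. First, you correctly flag the doubling property as the crux; it is worth noting that one can also obtain it without invoking the full $RH_q\Rightarrow A_\infty$ machinery, by a short telescoping argument: the pointwise domination $V(E)/V(B)\le C(|E|/|B|)^{1/q'}$ (which your part~(i) computation already gives) applied to thin concentric annuli between $B$ and $2B$ forces $V(2B)\le C'V(B)$ after finitely many steps, even when the $RH_q$ constant is large. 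Second, the passage from $f(R)\le 2^{n-2}f(2R)$ to $f(R)\ls\gamma^k$ needs a word, since $2R$ is generally not a dyadic multiple of $r_0$; the cleanest fix is to invoke part~(i) directly with $r=R$ and outer radius $2^{k}r_0\ge R$, which gives $f(R)\le C f(2^k r_0)$ because $2-n/q>0$, and then the iterated doubling bound $f(2^k r_0)\ls\gamma^k$ finishes the estimate. Neither point affects the soundness of the argument.
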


The following lemma is \cite[Theorem 4]{at98}.
\begin{lemma}[\cite{at98}]\label{lem 1-1}
Let $L_0:=-\div(A\nabla)$ with $A$ satisfying Assumption \ref{as 1}
and $\{e^{-tL_0}\}_{t\geq 0}$ the heat semigroup generated by $L_0$.
Then the kernels $h_t(x,y)$ of the heat semigroup $\{e^{-tL_0}\}_{t\geq 0}$ are
continuous and there exists a constant $\az_0\in(0,1]$ such that,
for any given $\az\in(0,\az_0)$,
\begin{equation*}
|h_t(x+h,y)-h_t(x,y)|+|h_t(x,y+h)-h_t(x,y)|\le
\frac{C}{t^{n/2}}\lf[\frac{|h|}{\sqrt{t}}\r]^\az e^{-c\frac{|x-y|^2}{t}},
\end{equation*}
where $t\in(0,\,\fz)$, $x,y,h\in\rn$ with $|h|\le\sqrt{t}$
and $C,c$ are positive constants independent of $t,x,y,h$.
\end{lemma}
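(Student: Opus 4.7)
The plan is to combine Aronson's Gaussian upper bound for the heat kernel $h_t(x,y)$ of $\{e^{-tL_0}\}_{t\geq 0}$ with the parabolic De Giorgi--Nash--Moser H\"{o}lder regularity of weak solutions to $\partial_s u-L_0 u=0$. Under Assumption \ref{as 1}, the classical Aronson--Davies argument first produces constants $C_1,\,c_1\in(0,\fz)$ such that
$$0\le h_t(x,y)\le \frac{C_1}{t^{n/2}}e^{-c_1|x-y|^2/t},\qquad t\in(0,\fz),\ x,y\in\rn.$$
The symmetry $a_{ij}=a_{ji}$ also yields $h_t(x,y)=h_t(y,x)$, so the increment in $y$ reduces to the increment in $x$; thus it suffices to bound $|h_t(x+h,y)-h_t(x,y)|$.

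Next, I would invoke the De Giorgi--Nash--Moser theorem, which furnishes an exponent $\az_0\in(0,\,1]$ depending only on $n$ and $\lz$, together with a constant $C>0$, such that, for any $\az\in(0,\,\az_0)$, any $r>0$, and any weak solution $u$ of $\partial_s u-L_0 u=0$ on the parabolic cylinder $Q_{2r}(x_0,s_0):=B(x_0,2r)\times(s_0-4r^2,s_0)$,
$$|u(z,s)-u(z',s')|\le C\lf(\frac{|z-z'|+\sqrt{|s-s'|}}{r}\r)^\az\sup_{Q_{2r}(x_0,s_0)}|u|$$
for all $(z,s),(z',s')\in Q_r(x_0,s_0)$. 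Fixing $y\in\rn$, the map $(z,s)\mapsto h_s(z,y)$ is a nonnegative weak solution on $\rn\times(0,\fz)$, and applying the above H\"{o}lder estimate on a parabolic cylinder of spatial radius $\sim\sqrt{t}$ centered near $(x,t)$ gives, for $|h|\ls\sqrt{t}$,
$$|h_t(x+h,y)-h_t(x,y)|\ls\lf(\frac{|h|}{\sqrt{t}}\r)^\az\sup_{(z,s)\in Q_{\sqrt{t}}(x,t)}h_s(z,y).$$

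The supremum is then controlled via the first step: for $(z,s)$ in that doubled cylinder one has $s\sim t$ and $|z-y|\geq|x-y|-\sqrt{t}$, whence a Young-type manipulation absorbing the cross term $2\sqrt{t}|x-y|$ into $\tfrac{1}{2}|x-y|^2+2t$ produces
$$\sup_{(z,s)\in Q_{\sqrt{t}}(x,t)}h_s(z,y)\ls\frac{1}{t^{n/2}}e^{-c|x-y|^2/t}$$
for some $c\in(0,c_1)$. Combining yields the claim when $|h|$ is small compared to $\sqrt{t}$; the remaining borderline range is absorbed by the triangle inequality together with the Gaussian upper bound applied separately to each kernel value, since there $(|h|/\sqrt{t})^\az\gs 1$.

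The main obstacle is coordinating the parabolic cylinder scale with the Gaussian decay. The parabolic scaling forces $r\sim\sqrt{t}$ so as to reproduce the factor $(|h|/\sqrt{t})^\az$, but then the supremum is taken over a neighborhood of $x$ of radius $\sqrt{t}$, precisely the threshold at which the Gaussian factor $e^{-c|x-y|^2/t}$ could degrade; the argument survives because $|x-y|-\sqrt{t}$ still yields $e^{-c|x-y|^2/t}$ after a small adjustment of $c$. The restriction $\az<\az_0$ merely reflects the qualitative nature of the H\"{o}lder exponent output by De Giorgi--Nash--Moser.
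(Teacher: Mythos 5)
The paper offers no proof of this lemma: it is imported verbatim as \cite[Theorem 4]{at98}, which in turn rests on exactly the two ingredients you invoke, namely Aronson's Gaussian upper bound for the heat kernel of a real symmetric uniformly elliptic divergence-form operator, and the parabolic De Giorgi--Nash--Moser interior H\"older estimate. Your reconstruction is therefore the standard proof behind the cited result, and the overall structure (symmetry to reduce to the $x$-increment, local H\"older estimate at parabolic scale $\sqrt{t}$, Gaussian bound to control the local supremum, Young's inequality to absorb the loss $|x-y|\mapsto|x-y|-O(\sqrt{t})$ into a smaller Gaussian constant, and the triangle-inequality fallback when $|h|$ is comparable to $\sqrt{t}$) is sound. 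The only point that needs repair is the choice of cylinder radius: with your convention $Q_{2r}(x_0,s_0)=B(x_0,2r)\times(s_0-4r^2,s_0)$, taking $r=\sqrt{t}$ at the top time $s_0=t$ makes the cylinder reach down to negative times, outside the domain $\rn\times(0,\infty)$ where $h_s(\cdot,y)$ is a caloric function. Choosing instead $r=\sqrt{t}/4$ (say), so that $Q_{2r}(x,t)=B(x,\sqrt{t}/2)\times(3t/4,t)$, keeps everything in the parabolic interior; one then covers the range $\sqrt{t}/4<|h|\le\sqrt{t}$ by the trivial triangle-inequality bound exactly as you describe, since there $(|h|/\sqrt{t})^\alpha\gtrsim 1$. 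With that adjustment the argument is complete.
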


The following lemma is \cite[Lemma 2.6]{y14}.
\begin{lemma}[\cite{y14}]\label{lem 1-2}
Let $L$ be as in \eqref{eq o} with $A$ satisfying Assumption \ref{as 1}
and $V\in RH_q(\rn)$, $q\in(n/2,\,\fz)$.
Assume that $K_t$ is the kernel of the heat semigroup $\{e^{-tL}\}_{t\geq 0}$ and
let
\begin{align}\label{eq mu}
\mu_0:=\min\lf\{\az_0,2-\frac{n}{q}\r\},
\end{align}
where $\az_0\in(0,1]$ is as in Lemma \ref{lem 1-1}.
\begin{enumerate}
\item[(i)] For any given $N\in\nn$,
there exist positive constants $C_{(N)}$ and $c$ such that,
for any $t\in(0,\,\fz)$ and every $(x,y)\in\rn\times\rn$,
\begin{equation}\label{eq Gauss1}
0\le K_t(x,\,y)\le\frac{C_{(N)}}{t^{n/2}}e^{-c\frac{|x-y|^2}{t}}
\lf[1+\sqrt{t}m(x,\,V)+\sqrt{t}m(y,\,V)\r]^{-N}.
\end{equation}

\item[(ii)] For any given $N\in\nn$ and $\mu\in(0,\,\mu_0)$,
there exist positive constants $C_{(N,\,\mu)}$ and $c$ such that,
for any $t\in(0,\,\fz)$ and every $x,y,h\in\rn$ with $|h|\le\sqrt{t}$,
\begin{align}\label{eq Gauss2}
&|K_t(x+h,\,y)-K_t(x,\,y)|+|K_t(x,\,y+h)-K_t(x,\,y)|\noz\\
&\hs\le \frac{C_{(N,\,\mu)}}{t^{n/2}}\lf[\frac{|h|}{\sqrt{t}}\r]^\mu
e^{-c\frac{|x-y|^2}{t}}
\lf[1+\sqrt{t}m(x,\,V)+\sqrt{t}m(y,\,V)\r]^{-N}.
\end{align}
\end{enumerate}
\end{lemma}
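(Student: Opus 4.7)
The plan is to prove both parts of Lemma \ref{lem 1-2} by perturbing off the kernel $h_t$ of $e^{-tL_0}$ with $L_0 = -\div(A\nabla)$, and then harvesting the polynomial decay factor $[1+\sqrt{t}m(x,V)+\sqrt{t}m(y,V)]^{-N}$ by iterating Duhamel's formula, using the reverse H\"older control packaged into the auxiliary function $m(\cdot,V)$ via Lemma \ref{lem aux1}.

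For part (i), I would first obtain the plain Gaussian bound. Since $V\ge 0$, the sesquilinear form perturbation argument (equivalently, Trotter--Kato/Feynman--Kac-type domination) gives $0\le K_t(x,y)\le h_t(x,y)$. The classical Aronson upper bound for divergence form operators with uniformly elliptic bounded measurable coefficients then produces $K_t(x,y)\le C t^{-n/2} e^{-c|x-y|^2/t}$, which is \eqref{eq Gauss1} with $N=0$. To install the additional polynomial decay, I would iterate the identity
\begin{equation*}
e^{-tL} = e^{-tL_0} - \int_0^t e^{-(t-s)L}\,V\, e^{-sL_0}\,ds.
\end{equation*}
After one application, the new kernel is bounded by an integral of $V$ against two Gaussians. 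Localizing $z$ to $B(x,\sqrt{t})$ and its dyadic annuli, Lemma \ref{lem aux1}(ii) converts the local $L^1$-mass $(\sqrt{t})^{2-n}\int_{B(x,\sqrt{t})}V$ into a factor comparable to $[1+\sqrt{t}m(x,V)]^{k_0}$, which, when combined with a free negative power produced by choosing the iteration depth, yields $[1+\sqrt{t}m(x,V)]^{-1}$ at each step. Iterating $N$ times gives the decay in $x$; the decay in $y$ follows symmetrically by working with the adjoint semigroup $e^{-tL^{*}}$. The exponential $e^{-c|x-y|^2/t}$ survives the iteration because the convolution of two Gaussians at times $s$ and $t-s$ is a Gaussian at scale $t$ (after shrinking $c$ slightly each step, absorbable into the constant).

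For part (ii), I would differentiate the same Duhamel representation in the spatial variable to obtain
\begin{equation*}
K_t(x+h,y) - K_t(x,y) = \bigl[h_t(x+h,y) - h_t(x,y)\bigr] - \int_0^t\!\!\int_\rn \bigl[K_{t-s}(x+h,z) - K_{t-s}(x,z)\bigr] V(z) h_s(z,y)\,dz\,ds.
\end{equation*}
The first term is handled directly by Lemma \ref{lem 1-1}, contributing the exponent $\alpha_0$. For the integral term, I would split the $z$-integration dyadically, handling $z \in B(x,\sqrt{t})$ via the $L^q$ reverse H\"older information on $V$ and using the constraint $|h|\le\sqrt{t}$ to extract the factor $(|h|/\sqrt{t})^{2-n/q}$ through H\"older's inequality in $z$; this is precisely why the admissible H\"older exponent saturates at $2-n/q$, and the overall exponent is capped at $\mu_0=\min\{\alpha_0,2-n/q\}$. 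The polynomial $m$-decay is then imposed by rerunning the iteration from part (i) on the H\"older-differenced kernels.

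The main obstacle is synchronizing the two mechanisms in part (ii): the iteration that produces the $[1+\sqrt{t}m(x,V)+\sqrt{t}m(y,V)]^{-N}$ factor must not degrade the sharp H\"older exponent $\mu_0$, and conversely the H\"older splitting must not spoil the Gaussian exponential. To manage this, I would set the Duhamel iteration on a dyadic spatial scale adapted to $1/m(x,V)$ and apply Lemma \ref{lem aux1}(ii) at each scale, so that each step independently contributes one $[1+\sqrt{t}m(x,V)]^{-1}$ while preserving both the H\"older modulus $(|h|/\sqrt{t})^{\mu}$ for $\mu<\mu_0$ and the Gaussian factor $e^{-c|x-y|^2/t}$ (with a constant $c$ shrunk finitely many times). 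Once this bookkeeping is in place, the conclusions \eqref{eq Gauss1} and \eqref{eq Gauss2} drop out together.
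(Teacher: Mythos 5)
First, note that the paper does not prove this lemma at all: it is stated with a citation to \cite{y14} (Lemma~2.6 there), which in turn builds on the Dziuba\'nski--Zienkiewicz machinery \cite{dz03} and the Auscher--Tchamitchian estimates \cite{at98} for $L_0:=-\div(A\nabla)$. So the comparison is necessarily against that standard route rather than against a proof in the present paper. Your overall skeleton matches it: Gaussian domination $0\le K_t\le h_t$ via the positivity of $V$ and the form perturbation; Lemma~\ref{lem 1-1} for the H\"older modulus of $h_t$ in part (ii); a perturbation formula to graft the polynomial decay factor $[1+\sqrt{t}m(x,V)+\sqrt{t}m(y,V)]^{-N}$ onto the Gaussian. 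That much is correct and is essentially what the cited proof does.

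However, there is a genuine gap in the mechanism you propose for the polynomial factor. You invoke Lemma~\ref{lem aux1}(ii), which is an \emph{upper} bound
$$\frac{1}{R^{n-2}}\int_{B(x,R)}V(y)\,dy\le C\,[Rm(x,V)]^{k_0}\quad\text{when}\quad Rm(x,V)\ge1,$$
and claim this, together with ``a free negative power produced by choosing the iteration depth,'' yields a factor $[1+\sqrt{t}m(x,V)]^{-1}$ at each Duhamel step. That does not work: an upper bound on the local $V$-mass says the perturbation term is not too large, which gives nothing about \emph{smallness} of $K_t$. The decay comes from the opposite-direction input: by the very definition \eqref{eq aux} of $m(\cdot,V)$ and the near-monotonicity in Lemma~\ref{lem aux1}(i), one has a \emph{lower} bound
$$\frac{1}{R^{n-2}}\int_{B(x,R)}V(y)\,dy\gtrsim\big[Rm(x,V)\big]^{2-n/q}\quad\text{for}\quad Rm(x,V)\ge1,$$
and it is precisely this that forces the heat mass to shrink once $\sqrt{t}\ge 1/m(y,V)$. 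In the standard proof one first establishes a crude gain $\int_\rn K_t(x,y)\,dx\le C(1+\sqrt{t}m(y,V))^{-\delta}$ for some fixed $\delta>0$ by combining the Duhamel identity with this lower bound (and the slowly varying property of $m(\cdot,V)$, which lets you compare $m(x,V)$ and $m(y,V)$ at distance $\lesssim 1/m(y,V)$); only then does the semigroup property, i.e.\ writing $e^{-tL}=(e^{-tL/k})^k$, bootstrap $\delta$ to an arbitrary $N$. There is no ``free negative power from the iteration depth'' in a raw Duhamel expansion, which is an alternating series and does not self-improve. So as written, part (i) of your argument does not actually produce the factor $[1+\sqrt{t}m(x,V)+\sqrt{t}m(y,V)]^{-N}$, and part (ii) inherits the same gap since you rerun the same iteration there. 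The H\"older-exponent bookkeeping (saturation at $\mu_0=\min\{\alpha_0,\,2-n/q\}$) and the Gaussian-persistence bookkeeping are fine in outline.
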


\begin{remark}\label{rem-3}
\begin{enumerate}
\item[(i)]
For any given $N\in\nn$,
by an argument similar to that used in the proof of \cite[Corollary 6.4]{dz03},
we find that, for any $z\in\Sigma^0_{\pi/5}$,
\begin{align}\label{eq 1.0}
|K_z(x,\,y)|\le C_{(N)}\frac{1}{|z|^{n/2}}e^{-c\frac{|x-y|^2}{|z|}}
\lf[1+\sqrt{|z|}m(x,\,V)+\sqrt{|z|}m(y,\,V)\r]^{-N},
\end{align}
where $K_z(\cdot,\,\cdot)$ denotes the integral kernel of the operator $e^{-zL}$
and $C_{(N)}$ is a positive constant depending only on $N$.
From the Cauchy formula and the holomorphy of the semigroup
$\{e^{-zL}\}_{z\in\Sigma^0_{\pi/5}}$, we deduce that,
for any $k\in\nn$ and $t\in(0,\,\fz)$,
\begin{align*}
\frac{d^k}{dt^k}e^{-tL}=(-L)^k e^{-tL}
=\frac{k!}{2\pi i}\int_{|\zeta-t|=\eta t}\frac{e^{-\zeta L}}{(\zeta-t)^{k+1}}\,d\zeta,
\end{align*}
where $\eta\in(0,\,\fz)$ is small enough such that
$\{\zeta\in\cc:\ |\zeta-t|=\eta t\}\varsubsetneqq\Sigma^0_{\pi/5}$
and $\Sigma^0_{\pi/5}$ is as in \eqref{eq 0.1}.
Hence, for every $x,\,y\in\rn$, we have
\begin{align}\label{eq 1.4}
\frac{\partial^k}{\partial t^k}K_t(x,\,y)
=(-1)^k\frac{k!}{2\pi i}\int_{|\zeta-t|=\eta t}\frac{K_\zeta(x,y)}{(\zeta-t)^{k+1}}\,d\zeta.
\end{align}
From this, \eqref{eq 1.0} and the fact that $|\zeta|\sim t$ for any
$\zeta\in\{\zeta\in\cc:\ |\zeta-t|=\eta t\}$, it follows that
\begin{align}\label{eq 1.1}
\lf|\frac{\partial^k}{\partial t^k}K_t(x,\,y)\r|
&\ls\int_{|\zeta-t|=\eta t}\frac{|K_\zeta(x,y)|}{|\zeta-t|^{k+1}}\,d|\zeta|\\
&\ls\frac{1}{t^{k+n/2}}e^{-c\frac{|x-y|^2}{t}}\lf[1+m(x,\,V)+m(y,\,V)\r]^{-N},\noz
\end{align}
where the implicit positive constant depends only on $k$, $\eta$ and $N$.

\item[(ii)]
For any given $N\in\nn$ and $\mu\in(0,\,\mu_0)$,
by \cite[Lemma 17]{at98}, we know that, for any $z\in\Sigma_{\pi/5}^0$ and $y\in\rn$,
\begin{align}\label{eq 1.3}
\|K_z(\cdot,\,y)\|_{C^\mu(\rn)}\ls|z|^{-\frac{n+\mu}{2}}
\end{align}
is equivalent to, for any $f\in L^1(\rn)$,
\begin{align}\label{eq 1.2}
\lf\|e^{-zL}(f)\r\|_{C^\mu(\rn)}\ls|z|^{-\frac{n+\mu}{2}}\|f\|_{L^1(\rn)}.
\end{align}
Similar to the proof of \cite[Lemma 19]{at98},
by \eqref{eq Gauss2} and interpolation,
we obtain \eqref{eq 1.2}, namely, \eqref{eq 1.3} holds true.
From \eqref{eq 1.3} and an argument similar to
the proof \cite[Proposition 4.11]{dz03},
we further deduce that, for any given $N\in\nn$,
there exists positive constants $C_{(N,\,\mu)}$ and $c$ such that,
for any $z\in\Sigma_{\pi/5}^0$ and every $x,y,h\in\rn$ with $|h|\le\sqrt{|z|}$,
\begin{align*}
&|K_z(x+h,\,y)-K_t(x,\,y)|+|K_z(x,\,y+h)-K_t(x,\,y)|\\
&\hs\le \frac{C_{(N,\,\mu)}}{|z|^{n/2}}\lf[\frac{|h|}{\sqrt{|z|}}\r]^\mu
e^{-c\frac{|x-y|^2}{|z|}}
\lf[1+\sqrt{|z|}m(x,\,V)+\sqrt{|z|}m(y,\,V)\r]^{-N}.
\end{align*}
This, combined with \eqref{eq 1.4}, implies that,
for any $k\in\nn$,
\begin{align}\label{eq 1.5}
&\lf|\frac{\partial^k}{\partial t^k}K_t(x+h,\,y)-
\frac{\partial^k}{\partial t^k}K_t(x,\,y)\r|
+\lf|\frac{\partial^k}{\partial t^k}K_t(x,\,y+h)-
\frac{\partial^k}{\partial t^k}K_t(x,\,y)\r|\\
&\hs\le \frac{C_{(N,\,\mu)}}{t^{k+n/2}}\lf[\frac{|h|}{\sqrt{t}}\r]^\mu
e^{-c\frac{|x-y|^2}{t}}
\lf[1+\sqrt{t}m(x,\,V)+\sqrt{t}m(y,\,V)\r]^{-N}.\noz
\end{align}

\end{enumerate}
\end{remark}

\begin{remark}\label{rem-2}
By Lemma \ref{lem 1-2}(i), we know that
the heat kernel $K_t$ satisfies the Gaussian upper bound.
From this and \cite[(3.2)]{y08}, we deduce that, for any $p\in (1,\,\fz)$,
the quadratic operator $S_L$ (see \eqref{eq quadratic})
is bounded on $L^p(\rn)$.
\end{remark}

The \emph{Hardy-Littlewood maximal operator} $\cm$ is defined by setting, for any $f\in L^1_{\rm loc}(\rn)$
and $x\in\rn$,
\begin{equation}\label{eq h-l}
\cm(f)(x):=\sup_{B\ni x}\frac{1}{|B|}\int_B |f(y)|\,dy,
\end{equation}
where the supremum is taken over all balls $B$ of $\rn$ containing $x$.

The following lemma establishes the boundedness of $\cm$ on $\vp$,
which is just \cite[Theorem 4.3.8]{dhhr11} (see also \cite[Theorem 3.16]{cf13}).
\begin{lemma}[\cite{dhhr11}]\label{lem 1-3}
Let $p(\cdot)\in C^{\log}(\rn)$ and $1<p_-\le p_+<\fz$. Then there exists a positive
constant $C$ such that, for any $f\in\vp$,
$$\|\cm(f)\|_{\vp}\le C\|f\|_{\vp}.$$
\end{lemma}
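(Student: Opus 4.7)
The plan is to prove the equivalent modular inequality: there exists $C\in(0,\fz)$ such that, for any $f\in\vp$ with $\int_\rn|f(x)|^{p(x)}\,dx\le1$, one has $\int_\rn[\cm(f)(x)]^{p(x)}\,dx\le C$. By the homogeneity relating the Luxemburg quasi-norm in \eqref{eq norm} to the modular $\int_\rn |f(x)|^{p(x)}\,dx$, this is equivalent to the claimed norm inequality $\|\cm(f)\|_{\vp}\le C\|f\|_{\vp}$.

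The core ingredient is a pointwise Diening-type estimate. Using both halves of the log-H\"{o}lder condition on $p(\cdot)$, one establishes that, for any ball $B\st\rn$, any $x\in B$ and any $f$ with $\int_\rn|f(y)|^{p(y)}\,dy\le 1$,
\begin{equation*}
\lf(\frac{1}{|B|}\int_B|f(y)|\,dy\r)^{p(x)}
\ls \frac{1}{|B|}\int_B |f(y)|^{p(y)}\,dy + h(x),
\end{equation*}
where $h\in L^1(\rn)\cap L^\fz(\rn)$ is a fixed function, for instance $h(x):=(e+|x|)^{-np_-}$. The local log-H\"{o}lder condition makes the variable exponent $p(x)$ interchangeable with the local essential extremes $p_\pm(B)$ on small balls, via the key estimate $|B|^{p_+(B)-p_-(B)}\le C$ when $r_B\le 1$; the log-H\"{o}lder condition at infinity governs balls of large radius by comparing $p(x)$ with the limiting value $p_\fz$.

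Taking the supremum over balls $B\ni x$ then yields the pointwise bound
\begin{equation*}
[\cm(f)(x)]^{p(x)}\ls \cm(|f|^{p(\cdot)})(x)+h(x).
\end{equation*}
Since $\cm$ is only weak-type $(1,1)$, one cannot directly integrate this to obtain the modular inequality. Instead, one fixes $s\in(1,p_-)$, sets $g(y):=|f(y)|^{p(y)/s}$ so that $\int_\rn g(y)^s\,dy\le1$, and derives a scaled variant of the pointwise bound of the form $[\cm(f)(x)]^{p(x)/s}\ls \cm(g)(x)+h(x)^{1/s}$. The classical Hardy--Littlewood theorem on $L^s(\rn)$ (available since $s>1$) then controls $\|\cm(g)\|_{L^s(\rn)}$ by $\|g\|_{L^s(\rn)}\le1$, and raising to the $s$-th power and integrating finishes the proof.

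The main obstacle is establishing the pointwise Diening-type estimate uniformly in $B$. Both halves of the log-H\"{o}lder condition are essential: the local estimate controls small balls, while the decay condition at infinity is needed when the ball $B$ has large radius and one must compare $p(x)$ with $p_\fz$ (the argument splits balls according to whether $r_B\le 1/(e+|x_B|)$ or not). The assumption $p_->1$ is essential, as it provides the room to apply the strong-type Hardy--Littlewood theorem at the auxiliary exponent $s\in(1,p_-)$; without it, only the weak-type analogue of the maximal inequality would follow.
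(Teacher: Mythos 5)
The paper does not prove this lemma: it is quoted directly from the literature (\cite[Theorem~4.3.8]{dhhr11}, \cite[Theorem~3.16]{cf13}), so there is no ``paper's own proof'' to compare against. Your sketch is a correct outline of the standard argument from those references: reduce to a modular inequality, establish Diening's pointwise estimate using both the local and at-infinity halves of the $\log$-H\"older condition, take a supremum over balls to get a pointwise bound on $\cm(f)$ in terms of $\cm(|f|^{p(\cdot)})$, and then exploit $p_->1$ by passing to an auxiliary exponent $s\in(1,p_-)$ so that the classical $L^s$-boundedness of $\cm$ can be invoked.

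One detail is glossed over, and you should be aware of it if you flesh this out. When you move from exponent $p(\cdot)$ to $p(\cdot)/s$, Diening's key estimate requires the normalization $\int_\rn |f(y)|^{p(y)/s}\,dy\le1$, which is \emph{not} a consequence of $\int_\rn|f(y)|^{p(y)}\,dy\le1$ (for $|f(y)|<1$ one has $|f(y)|^{p(y)/s}>|f(y)|^{p(y)}$). The standard remedy is to split $f=f\chi_{\{|f|>1\}}+f\chi_{\{|f|\le1\}}$: the large part inherits the correct normalization because $|f|\ge1$ makes $|f|^{p(\cdot)/s}\le|f|^{p(\cdot)}$, and the small part has $\cm(f\chi_{\{|f|\le1\}})\le1$, which lets one compare $p(x)$ with $p_\infty$ and control the resulting modular via the decay condition at infinity. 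Your proposal mentions only one route through the scaled estimate without this split, so as written it has a gap there; with the split inserted, the argument matches the cited proofs.
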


The following lemma is a particular case of \cite[Lemma 2.4]{yz17},
which is is a slight variant of \cite[Lemma 4.1]{s13}.
\begin{lemma}[\cite{yz17}]\label{lem-key}
Let $\kappa\in[1,\,\fz)$,
$p(\cdot)\in C^{\log}(\rn)$, $\underline{p}:=\min\{p_-,\,1\}$ and
$r\in[1,\,\fz]\cap(p_+,\,\fz]$, where $p_-$ and $p_+$ are as in \eqref{eq var}.
Then there exists a positive constant $C$ such that, for any sequence $\{B_j\}_{j\in\nn}$
of balls in $\rn$, $\{\lz_j\}_{j\in\nn}\st\mathbb{C}$ and functions $\{a_j\}_{j\in\nn}$ satisfying
that, for any $j\in\nn$, $\supp a_j\st \kappa B_j$ and $\|a_j\|_{L^r(\rn)}\le |B_j|^{1/r}$,
\begin{align}\label{eq-key}
\lf\|\lf(\sum_{j=1}^\fz|\lz_j a_j|^{\uz{p}}\r)^{\frac{1}{\uz{p}}}\r\|_{\vp}
\le C\kappa^{n(\frac1{\uz{p}}-\frac1r)}\lf\|\lf(\sum_{j=1}^\fz|\lz_j
\chi_{B_j}|^{\uz{p}}\r)^{\frac{1}{\uz{p}}}\r\|_{\vp}.
\end{align}
\end{lemma}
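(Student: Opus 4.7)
My plan is to prove \eqref{eq-key} via a duality argument on the Banach function space $L^{p(\cdot)/\uz p}(\rn)$, combined with H\"older's inequality applied to each individual atom and the $L^{q(\cdot)}$-boundedness of the Hardy--Littlewood maximal operator $\cm$ provided by Lemma~\ref{lem 1-3}.

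First, I would employ the scaling identity $\|F\|_{\vp}^{\uz p}=\|F^{\uz p}\|_{L^{p(\cdot)/\uz p}(\rn)}$ (valid since $\uz p\le 1$) to recast \eqref{eq-key} into the equivalent form
\[\lf\|\sum_{j}|\lz_{j}a_{j}|^{\uz p}\r\|_{L^{p(\cdot)/\uz p}(\rn)}\ls \kappa^{n(1-\uz p/r)}\lf\|\sum_{j}|\lz_{j}|^{\uz p}\chi_{B_{j}}\r\|_{L^{p(\cdot)/\uz p}(\rn)}.\]
Since $(p(\cdot)/\uz p)_{-}=p_{-}/\uz p\ge 1$, the space $L^{p(\cdot)/\uz p}(\rn)$ is a Banach function space; by the associate-space duality, it suffices to bound $\int_{\rn}\bigl(\sum_{j}|\lz_{j}a_{j}|^{\uz p}\bigr) g\,dx$ uniformly over nonnegative $g$ in the unit ball of $L^{(p(\cdot)/\uz p)'}(\rn)$.

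Second, for each $j$, H\"older's inequality with the constant exponent pair $(r/\uz p,\,r/(r-\uz p))$, combined with $\|a_{j}\|_{L^{r}(\rn)}\le|B_{j}|^{1/r}$ and $\supp a_{j}\subset\kappa B_{j}$, together with the defining bound $\frac{1}{|\kappa B_{j}|}\int_{\kappa B_{j}}|g|^{r/(r-\uz p)}\,dy\le \cm(|g|^{r/(r-\uz p)})(x)$ for any $x\in\kappa B_{j}$, yields
\[\int_{\rn}|a_{j}|^{\uz p}g\,dx\le \kappa^{n(1-\uz p/r)}|B_{j}|\,\bigl[\cm(|g|^{r/(r-\uz p)})(x)\bigr]^{(r-\uz p)/r}\]
for every $x\in B_{j}$. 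Averaging this constant upper bound over $B_{j}$ (using $\inf\le\mathrm{mean}$) gives
\[\int_{\rn}|a_{j}|^{\uz p}g\,dx\le \kappa^{n(1-\uz p/r)}\int_{B_{j}}\bigl[\cm(|g|^{r/(r-\uz p)})(x)\bigr]^{(r-\uz p)/r}\,dx,\]
and summing in $j$ with the weights $|\lz_{j}|^{\uz p}$ reassembles $\sum_{j}|\lz_{j}|^{\uz p}\chi_{B_{j}}(x)$ on the right-hand side.

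Third, I would apply H\"older's inequality in the $x$-variable with the pair $(L^{p(\cdot)/\uz p}(\rn), L^{(p(\cdot)/\uz p)'}(\rn))$ and absorb the maximal-function factor via Lemma~\ref{lem 1-3}, using the identity $\|\cm(h)^{\alpha}\|_{L^{q(\cdot)}}=\|\cm(h)\|_{L^{\alpha q(\cdot)}}^{\alpha}$ with $\alpha=(r-\uz p)/r$, to reduce it to $\|g\|_{L^{(p(\cdot)/\uz p)'}(\rn)}\le 1$. Taking the supremum over $g$ and then the $1/\uz p$-th root on both sides recovers \eqref{eq-key} with the correct factor $\kappa^{n(1/\uz p-1/r)}$. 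The main technical obstacle is verifying that Lemma~\ref{lem 1-3} genuinely applies to the composite exponent $\alpha\cdot(p(\cdot)/\uz p)'$: a direct pointwise computation shows that its essential infimum equals $p_{+}(r-\uz p)/[r(p_{+}-\uz p)]$, which strictly exceeds $1$ precisely because $p_{+}<r$, and its $C^{\log}(\rn)$-regularity is inherited from $p(\cdot)$; the borderline configuration $p_{-}=\uz p\le 1$, in which $(p(\cdot)/\uz p)_{-}=1$, requires a mild additional approximation step (e.g., replacing $\uz p$ throughout by some $s<\uz p$ close to it and then letting $s\uparrow\uz p$) in order to keep every dual exponent bounded away from infinity.
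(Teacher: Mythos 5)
Your plan---dualize on $L^{p(\cdot)/\underline{p}}(\rn)$, apply H\"older's inequality atom-by-atom to pull out a local average of $|g|^{r/(r-\underline{p})}$, dominate it by $\cm(|g|^{r/(r-\underline{p})})(x)$ for $x\in B_j$, reassemble $\sum_j|\lz_j|^{\underline{p}}\chi_{B_j}$, dualize again and invoke the boundedness of $\cm$---is precisely the argument behind \cite[Lemma 4.1]{s13}, which \cite{yz17} adapts, so you are taking the same route as the cited proof. Your exponent bookkeeping is also right: the composite exponent $\frac{r-\underline{p}}{r}\big(p(\cdot)/\underline{p}\big)'$ has essential infimum $\frac{(r-\underline{p})p_+}{r(p_+-\underline{p})}$, and this exceeds $1$ exactly because $p_+<r$; the geometric factor $\kappa^{n(1-\underline{p}/r)}=\kappa^{n\underline{p}(1/\underline{p}-1/r)}$ likewise checks out.

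The only place that needs tightening is the borderline case $p_-\le 1$, which you correctly flag but patch a bit too loosely. When $\underline{p}=p_-$, the exponent $\frac{r-\underline{p}}{r}(p(\cdot)/\underline{p})'$ is unbounded, so Lemma \ref{lem 1-3} as stated (it assumes $q_+<\infty$) does not apply directly. However, the fix \emph{``replace $\underline{p}$ by $s<\underline{p}$ and let $s\uparrow\underline{p}$''} is not enough as written: for $s<\underline{p}$ one has the pointwise inequality $\big(\sum_j|\lz_j|^s\chi_{B_j}\big)^{1/s}\ge\big(\sum_j|\lz_j|^{\underline{p}}\chi_{B_j}\big)^{1/\underline{p}}$, and if a point lies in infinitely many $B_j$ the $s$-version of the right-hand side can be infinite while the $\underline{p}$-version is finite, so $\mathrm{RHS}(s)\not\to\mathrm{RHS}(\underline{p})$ in general. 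Two clean repairs are available. (i) First prove the estimate for finitely many indices $j\le J$; there $\mathrm{RHS}(s)\to\mathrm{RHS}(\underline{p})$ as $s\uparrow\underline{p}$ by dominated convergence, and the maximal constant $C_s$ stays bounded because $1/q_s(\cdot)=\tfrac{r}{r-s}\big(1-s/p(\cdot)\big)$ has $\log$-H\"older constants, and $(q_s)_-$ bounded away from $1$, uniformly in $s$ near $\underline{p}$; then let $J\to\infty$ by monotone convergence on both sides. (ii) Alternatively, bypass the approximation entirely by invoking \cite[Theorem 4.3.8]{dhhr11} in its full strength, which permits $q_+=\infty$ and only requires $q_->1$ together with $\log$-H\"older continuity of $1/q(\cdot)$. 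With either repair your argument closes, so the gap is one of execution rather than of idea.
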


For more properties of the variable Lebesgue spaces $\vp$, we refer the
reader to \cite{cf13,dhhr11}.
\begin{remark}\label{rem-1}
Let $p(\cdot)\in\cp(\rn)$.
\begin{enumerate}
\item[(i)] For any $\lz\in\mathbb{C}$ and $f\in\vp$, $\|\lz f\|_{\vp}=|\lz|\|f\|_{\vp}$.
In particular, if $p_-\in[1,\,\fz)$, then $\|\cdot\|_{\vp}$ is a norm, namely,
for any $f,\,g\in\vp$,
$$\|f+g\|_{\vp}\le \|f\|_{\vp}+\|g\|_{\vp}.$$

\item[(ii)] By the definition of $\|\cdot\|_{\vp}$ (see \eqref{eq norm}),
it is easy to see that, for any $f\in\vp$ and $s\in(0,\,\fz)$,
\begin{equation*}
\lf\||f|^s\r\|_{\vp}=\|f\|_{L^{sp(\cdot)}(\rn)}^s.
\end{equation*}

\item[(iii)] Let $p_-\in(1,\,\fz)$. Then, by \cite[Lemma 3.2.20]{dhhr11},
we find that, for any $f\in\vp$
and $g\in L^{{p'}(\cdot)}(\rn)$,
$$\int_\rn|f(x)g(x)|\,dx\le 2\|f\|_{\vp}\|g\|_{L^{{p'}(\cdot)}(\rn)},$$
where $p'(\cdot)$ is the \emph{dual variable exponent} of $p(\cdot)$,
which is defined in \eqref{eq dual}.

\item[(iv)] Let $p_-\in(1,\,\fz)$.
Then, by \cite[Theorem 2.34]{cf13}
and \cite[Corollary 3.2.14]{dhhr11}), we know that, for any $f\in\vp$,
\begin{equation*}
\frac12\|f\|_{\vp}\le \sup_{\{g\in L^{p'(\cdot)}(\rn):\ \|g\|_{L^{p'(\cdot)}(\rn)}\le 1\}}
\int_{\rn}|f(x)g(x)|\,d\mu(x)\le 2\|f\|_{\vp}.
\end{equation*}

\end{enumerate}
\end{remark}

\section{Proofs of main results}
\subsection{Proof of Theorem \ref{thm-3}}\label{s-4.1}
In this subsection, we prove Theorem \ref{thm-3}.
We begin with introducing some auxiliary lemmas.

Let $q\in[1,\,\fz)$. Recall that a non-negative and locally integrable function $w$ on $\rn$
is said to belong to the \emph{class $A_q(\rn)$ of Muckenhoupt weights}, denoted by $w\in A_q(\rn)$,
if, when $q\in(1,\fz)$,
\begin{align*}
A_q(w):=\sup_{B\st\rn}
\frac{1}{|B|}\int_B w(x)\,dx\lf\{\frac{1}{|B|}\int_B [w(x)]^{-\frac{1}{q-1}}\,dx\r\}^{q-1}<\fz
\end{align*}
or
\begin{align*}
A_1(w):=\sup_{B\st\rn}\frac{1}{|B|}\int_B w(x)\,dx\lf\{\essinf_{x\in B}w(x)\r\}^{-1}<\fz,
\end{align*}
where the suprema are taken over all balls $B$ of $\rn$.

We need the following lemma which is
called the \emph{extrapolation theorem} for $\vp$ (see, for example,
\cite[Theorem 1.3]{cfmp06}).
\begin{lemma}[\cite{cfmp06}]\label{lem extro}
Let $\cf$ be a family of pairs of measurable functions on $\rn$.
Assume that, for some $p_0\in(0,\,\fz)$ and any $w\in A_1(\rn)$,
\begin{equation*}
\int_\rn|f(x)|^{p_0}w(x)\,dx\le C_{(w)}\int_\rn|g(x)|^{p_0}w(x)\,dx\ \ \
\text{for any}\ \ (f,\,g)\in\cf,
\end{equation*}
where the positive constant $C_{(w)}$ depends only on $A_1(w)$.
Let $p(\cdot)\in C^{\log}(\rn)$ such that $p_-\in(p_0,\,\fz)$.
Then there exists a positive constant $C$ such that, for any $(f,\,g)\in\cf$,
\begin{equation*}
\|f\|_{\vp}\le C\|g\|_{\vp}.
\end{equation*}
\end{lemma}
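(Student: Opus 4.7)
The plan is to prove this extrapolation result via the Rubio de Francia iteration scheme, which is available in the variable exponent setting thanks to the boundedness of the Hardy--Littlewood maximal operator on $\vp$ (Lemma \ref{lem 1-3}). Set $q(\cdot):=p(\cdot)/p_0$, so the hypothesis $p_->p_0$ translates into $q_->1$, and hence $(q')_+<\fz$, $(q')_-\in(1,\fz)$; moreover $q'(\cdot)\in C^{\log}(\rn)$, so $\cm$ is bounded on $L^{q'(\cdot)}(\rn)$. By Remark \ref{rem-1}(ii) one has $\|f\|_{\vp}^{p_0}=\||f|^{p_0}\|_{L^{q(\cdot)}(\rn)}$, and Remark \ref{rem-1}(iv) lets one dualize this last norm against $L^{q'(\cdot)}(\rn)$, reducing the theorem to estimating the pairing $\int_{\rn}|f(x)|^{p_0}h(x)\,dx$ uniformly over nonnegative $h$ with $\|h\|_{L^{q'(\cdot)}(\rn)}\le 1$.

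Given such an $h$, I would construct an $A_1(\rn)$-weight majorizing $h$ by the Rubio de Francia algorithm, defining
\[
Rh(x):=\sum_{k=0}^{\fz}\frac{\cm^{k}(h)(x)}{2^{k}\,\|\cm\|^{k}_{L^{q'(\cdot)}(\rn)\to L^{q'(\cdot)}(\rn)}},
\]
where $\cm^{k}$ denotes the $k$-fold iteration of the Hardy--Littlewood maximal operator. Lemma \ref{lem 1-3} yields $\|Rh\|_{L^{q'(\cdot)}(\rn)}\le 2\|h\|_{L^{q'(\cdot)}(\rn)}\le 2$; the pointwise estimate $h\le Rh$ is immediate from the $k=0$ term; and the subadditivity of $\cm$ together with a term-by-term comparison gives $\cm(Rh)\le 2\|\cm\|_{L^{q'(\cdot)}(\rn)\to L^{q'(\cdot)}(\rn)}\,Rh$, so $Rh\in A_1(\rn)$ with $A_1$-constant depending only on the operator norm of $\cm$ on $L^{q'(\cdot)}(\rn)$ (and hence only on $p(\cdot)$, by Lemma \ref{lem 1-3}).

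Applying the weighted hypothesis with $w:=Rh$ and then the variable exponent Hölder inequality of Remark \ref{rem-1}(iii), I would obtain
\[
\int_{\rn}|f(x)|^{p_0}h(x)\,dx
\le \int_{\rn}|f|^{p_0}Rh
\ls \int_{\rn}|g|^{p_0}Rh
\ls \bigl\||g|^{p_0}\bigr\|_{L^{q(\cdot)}(\rn)}\|Rh\|_{L^{q'(\cdot)}(\rn)}
\ls \|g\|_{\vp}^{p_0},
\]
where the implicit constants depend only on $A_1(Rh)$ and on $p(\cdot)$. Taking the supremum over admissible $h$ and invoking Remark \ref{rem-1}(iv) yields $\|f\|_{\vp}^{p_0}\ls\|g\|_{\vp}^{p_0}$, which is the desired inequality.

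The main technical obstacle is that the weighted hypothesis requires the left-hand side to be finite before the inequality can be invoked, and a priori $\int |f|^{p_0}Rh$ need not be. The standard fix is a truncation argument: replace $f$ by $f_R:=f\chi_{B(\vec{0}_n,R)\cap\{|f|\le R\}}$ (and analogously replace $g$), apply the Rubio de Francia scheme to $f_R$, and then let $R\to\fz$ by monotone convergence, provided one first verifies that the family $\cf$ behaves well under such truncations (which is the standing convention in the variable exponent extrapolation framework of \cite{cfmp06}). This bookkeeping, together with the pointwise $A_1$-identity for $Rh$, is the only nontrivial ingredient; all remaining steps are direct applications of Lemma \ref{lem 1-3}, Remark \ref{rem-1}, and the weighted hypothesis.
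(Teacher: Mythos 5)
The paper does not give its own proof of this lemma; it is quoted directly from \cite[Theorem 1.3]{cfmp06}. Your Rubio de Francia iteration argument (renormalizing by $q(\cdot)=p(\cdot)/p_0$, dualizing via Remark \ref{rem-1}(iv), majorizing the test function $h$ by the $A_1$ weight $Rh$ built from iterated maximal functions, and applying the weighted hypothesis with $w=Rh$) is precisely the standard proof given in that reference, and it is correct, including the remark about the a priori finiteness/truncation caveat on the family $\cf$.
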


We also need the following lemma, which is just \cite[Theorem 1.7]{ks00}
and plays a key role in the proof of Theorem \ref{thm-3}.
\begin{lemma}[\cite{ks00}]\label{lem k-3}
Let $L$ be as in \eqref{eq o}
and $(VL^{-1})^\ast$, $(V^{1/2}\nabla L^{-1})^\ast$ the usual
dual operators of $VL^{-1}$, $V^{1/2}\nabla L^{-1}$.
\begin{enumerate}
\item[(i)]
If $A$ satisfies Assumption \ref{as 1}
and $V\in RH_q(\rn)$ with $q\in(n/2,\,\fz)$,
then there exists a positive constant $C$ such that,
for any $f\in C_c^\fz(\rn)$ and $x\in\rn$,
\begin{align*}
\lf|\lf(VL^{-1}\r)^\ast(f)(x)\r|\le C\lf[\cm\lf(|f|^{q'}\r)(x)\r]^{1/q'},
\end{align*}
where $\cm$ is the Hardy-Littlewood maximal operator as in \eqref{eq h-l}
and $q'$ the conjugate number of $q$.

\item[(ii)]
If $A$ satisfies Assumptions \ref{as 1} and \ref{as 2},
and $V\in RH_q(\rn)$ with $q\in(n/2,\,\fz)$,
then there exists a positive constant $C$ such that,
for any $f\in C_c^\fz(\rn)$ and $x\in\rn$,
\begin{align*}
\lf|\lf(V^{1/2}\nabla L^{-1}\r)^\ast(f)(x)\r|\le C\lf[\cm\lf(|f|^{p_0'}\r)(x)\r]^{1/p_0'},
\end{align*}
where $p_0$ is as in \eqref{eq 4.z} and $p_0'$ the conjugate number of $p_0$.
\end{enumerate}
\end{lemma}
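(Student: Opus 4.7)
The plan is to represent each adjoint operator by means of the Green function of $L$ obtained by integrating the heat semigroup in time, to derive pointwise decay estimates on this Green function (and its gradient) that carry the auxiliary function $m(\cdot,V)$ of \eqref{eq aux} inside, and then to combine a dyadic decomposition around the critical scale $R:=1/m(x,V)$ with H\"older's inequality and the reverse H\"older property of $V$.

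First, I would note that $L$ is formally self-adjoint under Assumption \ref{as 1}, so with $\Gamma(x,y):=\int_0^\fz K_t(x,y)\,dt$ we have, for any $f\in C_c^\fz(\rn)$,
\begin{align*}
(VL^{-1})^\ast(f)(x)=\int_\rn \Gamma(x,y)V(y)f(y)\,dy,\quad
(V^{1/2}\nabla L^{-1})^\ast(f)(x)=\int_\rn \nabla_y\Gamma(x,y)\cdot V^{1/2}(y)f(y)\,dy,
\end{align*}
the second identity following by an integration by parts whose legitimacy is a consequence of the decay shown next. Integrating \eqref{eq Gauss1} in $t$ (splitting at $t=|x-y|^2$) yields, for any $N\in\nn$,
\begin{align*}
\Gamma(x,y)\ls |x-y|^{2-n}\bigl[1+|x-y|m(x,V)\bigr]^{-N}.
\end{align*}
Under Assumption \ref{as 2} a standard upgrade of Lemma \ref{lem 1-1} gives $|\nabla_y K_t(x,y)|\ls t^{-(n+1)/2}e^{-c|x-y|^2/t}[1+\sqrt{t}m(x,V)]^{-N}$, and integrating likewise produces $|\nabla_y\Gamma(x,y)|\ls |x-y|^{1-n}[1+|x-y|m(x,V)]^{-N}$.

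Next, I would fix $x\in\rn$, set $R:=1/m(x,V)$, $B_0:=B(x,R)$, and $U_k(B_0):=B(x,2^kR)\setminus B(x,2^{k-1}R)$, and split each integral into dyadic pieces. For part (i) this gives
\begin{align*}
\lf|(VL^{-1})^\ast(f)(x)\r|\ls\sum_{k\in\zz}(2^kR)^{2-n}\bigl(1+2^k\bigr)^{-N}\int_{B(x,2^kR)}V(y)|f(y)|\,dy.
\end{align*}
Applying H\"older with exponents $(q,q')$ together with $V\in RH_q(\rn)$ produces $\int_{B(x,2^kR)}V|f|\ls\bigl(\int_{B(x,2^kR)}V\bigr)\,[\cm(|f|^{q'})(x)]^{1/q'}$, reducing everything to controlling $(2^kR)^{2-n}\int_{B(x,2^kR)}V$. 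Lemma \ref{lem aux1}(ii) bounds this by $2^{kk_0}$ for $k\ge 0$, and Lemma \ref{lem aux1}(i) gives $2^{k(2-n/q)}$ for $k<0$ (note $n/q<2$ since $q>n/2$); choosing $N>k_0$ makes the geometric series converge and yields the desired maximal function bound. Part (ii) proceeds in exactly the same framework, but uses the gradient bound on $\Gamma$ and H\"older at the exponent $p_0$ of \eqref{eq 4.z}: the extra factor $V^{1/2}$ is handled via the fact that $V^{1/2}$ inherits a reverse H\"older property from $V\in RH_q$, and the precise exponent $p_0$ emerges from balancing the $|x-y|^{1-n}$ decay against this weaker integrability.

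The main obstacle will be the weighted gradient heat kernel estimate needed in part (ii), which is not explicitly recorded in the excerpt: deriving it requires using Assumption \ref{as 2} (and a perturbative Caccioppoli-type argument around the elliptic piece $-\div(A\nabla)$) to upgrade the Hölder-in-space bound of Lemma \ref{lem 1-2}(ii) to an actual gradient estimate that preserves the $[1+\sqrt{t}m(\cdot,V)]^{-N}$ decay. Once this ingredient is in place, the restricted range $p\in(1,p_0)$ in Theorem \ref{thm-3}(ii) is dictated by the threshold at which the dyadic H\"older sum in the preceding step remains summable, while the range $p<q$ in parts (i) and (iii) matches the natural integrability imposed by $V\in RH_q(\rn)$.
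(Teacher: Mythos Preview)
The paper does not prove this lemma at all; it is simply quoted from \cite[Theorem~1.7]{ks00}. Your outline is, in substance, the argument carried out there (which in turn parallels Shen's original proof in \cite{sh}): represent the adjoint via the fundamental solution $\Gamma$ of $L$, use the pointwise bounds
\[
0\le\Gamma(x,y)\ls\frac{[1+|x-y|m(x,V)]^{-N}}{|x-y|^{n-2}},\qquad
|\nabla_y\Gamma(x,y)|\ls\frac{[1+|x-y|m(x,V)]^{-N}}{|x-y|^{n-1}},
\]
decompose dyadically at scale $R=1/m(x,V)$, and combine H\"older's inequality with the reverse H\"older property of $V$ and Lemma~\ref{lem aux1}. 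So your strategy is correct and matches the cited source.

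One methodological difference worth noting: in \cite{ks00} and \cite{sh} the two displayed estimates on $\Gamma$ are obtained \emph{directly} from the equation $Lu=0$ via iterated Caccioppoli inequalities and interior elliptic regularity (this is where Assumption~\ref{as 2} enters for the gradient bound), not by first proving weighted heat-kernel bounds and integrating in $t$ as you propose. Your route via the heat kernel works cleanly for the size estimate on $\Gamma$, but the pointwise gradient bound $|\nabla_y K_t(x,y)|\ls t^{-(n+1)/2}e^{-c|x-y|^2/t}[1+\sqrt t\,m(x,V)]^{-N}$ that you call a ``standard upgrade'' is not recorded in the paper and would itself require a Caccioppoli-type localization preserving the $m(\cdot,V)$ weight---so you end up needing essentially the same elliptic ingredient either way. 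The direct approach on $\Gamma$ is shorter; your heat-kernel approach has the advantage of fitting naturally with the machinery already developed in Section~\ref{s2}.
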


We are now in a position to prove Theorem \ref{thm-3}.
\begin{proof}[Proof of Theorem \ref{thm-3}]
We first prove (i). For any $f,\,g\in C_c^\fz(\rn)$,
we have
\begin{align}\label{eq 2.13x}
\lf\langle VL^{-1}(f),\,g\r\rangle
&:=\int_\rn VL^{-1}(f)(x)g(x)\,dx\noz\\
&=\int_\rn\lf[\int_\rn V(x)\Gamma(x,y)f(y)\,dy\r]g(x)\,dx\noz\\
&=\int_\rn f(y)\lf[\int_\rn V(x)\Gamma(x,y)g(x)\,dx\r]\,dy\noz\\
&=:\lf\langle f,\, \lf(VL^{-1}\r)^\ast(g)\r\rangle,
\end{align}
where $\Gamma(\cdot,\,\cdot)$ denotes the fundamental solution of $L$.
By \eqref{eq 2.13x} and Remark \ref{rem-1}(iv), we have
\begin{align}\label{eq 2.12}
\lf\|VL^{-1}(f)\r\|_{\vp}
&\ls\sup_{\|g\|_{L^{p'(\cdot)}(\rn)}\le1}\lf|\int_\rn VL^{-1}(f)(x)g(x)\,dx\r|\noz\\
&\sim\sup_{\|g\|_{L^{p'(\cdot)}(\rn)}\le1}\lf|\int_\rn f(x)\lf(VL^{-1}\r)^\ast(g)(x)\,dx\r|\noz\\
&\ls\sup_{\|g\|_{L^{p'(\cdot)}(\rn)}\le1}\|f\|_{\vp}
\lf\|\lf(VL^{-1}\r)^\ast(g)\r\|_{L^{p'(\cdot)}(\rn)}.
\end{align}
By Lemma \ref{lem k-3}(i), we know that, for any $f\in C_c^\fz(\rn)$ and $x\in\rn$,
\begin{equation}\label{eq 2.18}
\lf|\lf(VL^{-1}\r)^\ast(f)(x)\r|\ls\lf[M\lf(|f|^{q'}\r)(x)\r]^{1/q'}.
\end{equation}
Moreover, by the fact that $1<p_-\le p_+<q$,
it is easy to see that $1<q'<p'_-\le p'_+<\fz$.
Hence, $(\frac{p'(\cdot)}{q'})_->1$.
From this, \eqref{eq 2.18}, Remark \ref{rem-1}(ii) and Lemma \ref{lem 1-3},
we deduce that, for any $g\in C_c^\fz(\rn)$,
\begin{align*}
\lf\|\lf(VL^{-1}\r)^\ast(g)\r\|_{L^{p'(\cdot)}(\rn)}
&\ls\lf\|\lf[M\lf(|g|^{q'}\r)\r]^{1/q'}\r\|_{L^{p'(\cdot)}(\rn)}
\sim\lf\|M\lf(|g|^{q'}\r)\r\|_{L^{\frac{p'(\cdot)}{q'}}(\rn)}^{1/q'}\\
&\ls\lf\||g|^{q'}\r\|_{L^{\frac{p'(\cdot)}{q'}}(\rn)}^{1/q'}
\sim\|g\|_{L^{p'(\cdot)}(\rn)}.
\end{align*}
From this and \eqref{eq 2.12}, we deduce that
\eqref{eq 2.14} holds true.

For (ii), by Lemma \ref{lem k-3}(ii) and
an argument similar to that used in the
proof of (i), we know that \eqref{eq 2.13} holds true.

Next, we prove (iii). Let $L_0:=-\div(A\nabla)=L-V$.
Then, by \cite[Theorem 2.7]{ks00}, we find that, for any given $p\in (1,\,\fz)$ and $w\in A_p(\rn)$,
$\nabla^2 L_0^{-1}$ is bounded on the weighted Lebesgue space $L^p(w)$, which
is defined to be the set of all measurable function $f$ on $\rn$ such that
$$\|f\|_{L^p(w)}:=\lf[\int_\rn |f(x)|^pw(x)\,dx\r]^{1/p}<\fz.$$
From this and Lemma \ref{lem extro},
we deduce that if $1<p_-\le p_+<\fz$, then $\nabla^2L_0^{-1}$
is bounded on $\vp$. By this, \eqref{eq 2.14} and
the fact that $L=L_0+V$, we find that if $1<p_-\le p_+<q$,
then, for any $f\in\vp$,
\begin{align*}
\lf\|\nabla^2L^{-1}(f)\r\|_{\vp}
&=\lf\|\nabla^2L_0^{-1}L_0L^{-1}\r\|_{\vp}\\
&\ls\|(L-V)L^{-1}(f)\|_{\vp}\ls\|f\|_{\vp}.
\end{align*}
This finishes the proof of Theorem \ref{thm-3}.
\end{proof}

\subsection{Proof of Theorem \ref{thm-4}}\label{s-4.2}

In this subsection, we give the proof of Theorem \ref{thm-4}.
To this end, we apply the method used in \cite{ly14,ly16}.
We begin with introducing an inequality, which is an analogue of \cite[Lemma 2.7]{ly16}
and inspired by coercive estimates on semiconvex domains established in
\cite[Theorem 4.8 and Lemma 4.17]{dhmmy13}.

\begin{lemma}\label{lem k-1}
Let $p\in(1,\,\fz)$ and $L_0:=-\div(A\nabla)$ with $A$ satisfying Assumptions
\ref{as 1}, \ref{as 2} and \ref{as 3}.
Then there exists a positive constant $C$ such that,
for any $f\in W^{2,\,p}(\rn)$ and $\phi\in C_c^\fz(\rn)$,
\begin{align}\label{eq k-1}
&\lf\|\phi\lf|\nabla^2 f\r|\r\|_{L^p(\rn)}\noz\\
&\hs\le C\lf[\lf\|f\lf|\nabla^2\phi\r|\r\|_{L^p(\rn)}
+\big\||\nabla f||\nabla\phi|\big\|_{L^p(\rn)}
+\lf\|\phi L_0(f)\r\|_{L^p(\rn)}\r].
\end{align}
\end{lemma}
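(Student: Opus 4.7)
The strategy is to reduce this weighted second-order estimate to the global $L^p$-boundedness of $\nabla^2 L_0^{-1}$ on $\rn$. By \cite[Theorem B]{al91}, which crucially exploits the periodicity and $C^{1+\az}$-regularity of $A$ from Assumption \ref{as 3}, one has
\begin{align*}
\lf\|\nabla^2 u\r\|_{L^p(\rn)}\le C\|L_0 u\|_{L^p(\rn)}
\end{align*}
for every $u\in W^{2,\,p}(\rn)$. I would apply this to $u=\phi f$.

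The product rule, together with the symmetry of $A$ from Assumption \ref{as 1}, yields the commutator identity
\begin{align*}
L_0(\phi f)=\phi L_0(f)-2(A\nabla\phi)\cdot\nabla f-f\,\div(A\nabla\phi),
\end{align*}
while the chain rule gives $\phi\nabla^2 f=\nabla^2(\phi f)-2\nabla\phi\otimes\nabla f-f\nabla^2\phi$. Combining these with the above Avellaneda-Lin estimate applied to $\phi f$, and using the uniform bound $|A|\le\lz^{-1}$ from Assumption \ref{as 1}, I arrive at
\begin{align*}
\lf\|\phi|\nabla^2 f|\r\|_{L^p(\rn)}
\ls\|\phi L_0(f)\|_{L^p(\rn)}+\big\||\nabla f||\nabla\phi|\big\|_{L^p(\rn)}
+\lf\|f|\nabla^2\phi|\r\|_{L^p(\rn)}+\lf\|f\,\div(A\nabla\phi)\r\|_{L^p(\rn)}.
\end{align*}

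The only genuinely new obstacle is to absorb the last term into the right-hand side of \eqref{eq k-1}. Expanding gives
\begin{align*}
\div(A\nabla\phi)=\sum_{i,j=1}^n a_{ij}\,\partial_i\partial_j\phi
+\sum_{j=1}^n\bigg(\sum_{i=1}^n\partial_i a_{ij}\bigg)\partial_j\phi.
\end{align*}
The first sum is pointwise dominated by $|\nabla^2\phi|$ by Assumption \ref{as 1}, and it contributes exactly to the allowed term $\|f|\nabla^2\phi|\|_{L^p(\rn)}$. The second sum would produce a forbidden contribution of type $\||f||\nabla\phi|\|_{L^p(\rn)}$; this is precisely where Assumption \ref{as 3} is used, as the divergence-type cancellation there (together with symmetry) forces $\sum_{i=1}^n\partial_i a_{ij}\equiv 0$ for each $j$, making the second sum vanish identically. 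The main difficulty of the proof is therefore concentrated in the Avellaneda-Lin input: the commutator algebra is routine, but the global $L^p$-boundedness of $\nabla^2 L_0^{-1}$ on $\rn$ requires the full strength of Assumption \ref{as 3}, which is why this hypothesis is indispensable exactly as the authors remark after the statement.
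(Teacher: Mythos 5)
Your proposal is correct and follows essentially the same route as the paper: apply the global estimate $\|\nabla^2 u\|_{L^p}\ls\|L_0 u\|_{L^p}$ (Avellaneda--Lin, also recorded as \cite[Theorem~2.7]{ks00}) to $u=\phi f$, expand the commutator $L_0(\phi f)$ via the Leibniz rule and symmetry of $A$, and invoke Assumption~\ref{as 3} to kill the $\sum_i\partial_i a_{ij}$ term so that $|L_0(\phi)|\ls|\nabla^2\phi|$. Your identification of the two distinct places where Assumption~\ref{as 3} is used (the global $L^p$-boundedness of $\nabla^2L_0^{-1}$ and the vanishing of the drift term) matches the authors' reasoning exactly.
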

\begin{proof}

Indeed, for any $f\in W^{2,\,p}(\rn)$ and $\phi\in C_c^\fz(\rn)$, we have
\begin{align*}
\phi\partial_j(\partial_k f)
&=\partial_j(\phi\partial_k f)-\partial_j\phi\partial_k f
=\partial_j(\partial_k(\phi f)-(\partial_k\phi)f)-\partial_j\phi\partial_k f\\
&=\partial_j(\partial_k(\phi f))-\partial_j f\partial_k\phi-f\partial_j(\partial_k\phi)
-\partial_j\phi\partial_k f.
\end{align*}
By this and the fact that $\nabla^2L_0^{-1}$ is bounded on $L^p(\rn)$ for any $p\in(1,\,\fz)$
(see \cite[Theorem 2.7]{ks00}), we know that
\begin{align}\label{eq 4.b}
&\lf\|\phi\partial_j(\partial_k f)\r\|_{L^p(\rn)}\noz\\
&\hs\le\lf\|\nabla^2(\phi f)\r\|_{L^p(\rn)}+2\big\||\nabla f||\nabla\phi|\big\|_{L^p(\rn)}
+\lf\|f\lf|\nabla^2\phi\r|\r\|_{L^p(\rn)}\noz\\
&\hs\ls\|L_0(\phi f)\|_{L^p(\rn)}+2\big\||\nabla f||\nabla\phi|\big\|_{L^p(\rn)}
+\lf\|f\lf|\nabla^2\phi\r|\r\|_{L^p(\rn)}.
\end{align}

For $L_0(\phi f)$, it holds true that
\begin{align*}
L_0(\phi f)
&=-\div(f(A\nabla\phi)+\phi (A\nabla f))\\
&=-(A\nabla\phi)\cdot(\nabla f)+fL_0(\phi)-(A\nabla f)\cdot(\nabla\phi)
+\phi L_0(f).
\end{align*}
Thus, we obtain
\begin{align}\label{eq 4.a}
\|L_0(\phi f)\|_{L^p(\rn)}
\ls\|f L_0(\phi)\|_{L^p(\rn)}+\big\||\nabla\phi||\nabla f|\big\|_{L^p(\rn)}
+\|\phi L_0(f)\|_{L^p(\rn)}.
\end{align}
For $L_0(\phi)$, we find that
\begin{align}\label{eq 4.d}
L_0(\phi)
&=-\div(A\nabla\phi)
=-\sum_{i=1}^n\partial_i\lf(\sum_{j=1}^na_{ij}(\partial_j\phi)\r)\noz\\
&=-\sum_{i=1}^n\sum_{j=1}^n
\lf[(\partial_i a_{ij})(\partial_j\phi)+a_{ij}(\partial_i\partial_j\phi)\r].
\end{align}
By Assumption \ref{as 3}, we know that $\sum_{i=1}^n\partial_ia_{ij}=0$ for any $j\in\{1,\,\ldots,\,n\}$.
This further implies that
$
-\sum_{i=1}^n\sum_{j=1}^n(\partial_ia_{ij})(\partial_j\phi)
=-\sum_{j=1}^n\sum_{i=1}^n(\partial_ia_{ij})(\partial_j\phi)=0.
$
From this, \eqref{eq 4.d} and Assumption \ref{as 1}, we deduce that
\begin{align*}
|L_0(\phi)|
=\lf|\sum_{i=1}^n\sum_{j=1}^na_{ij}(\partial_i\partial_j\phi)\r|
\ls\lf|\nabla^2\phi\r|.
\end{align*}
This, combined with \eqref{eq 4.b} and \eqref{eq 4.a}, implies that
\eqref{eq k-1} holds true.
Hence, we complete the proof of Lemma \ref{lem k-1}.
\end{proof}

\begin{remark}
In Lemma \ref{lem k-1}, if $A=I$ is the identity matrix, namely,
$L_0=-\Delta$ is the Laplacian operator,
then Lemma \ref{lem k-1} is just \cite[Lemma 2.7]{ly16}.
\end{remark}

Next, inspired by F. K. Ly \cite[Proposition 3.3]{ly14} and \cite[Proposition 2.4]{ly16},
we introduce some weighted estimates for the spatial derivatives of the heat kernel of $\{e^{-tL}\}_{t\geq 0}$,
which play a key role in the proof of Theorem \ref{thm-4}.

\begin{lemma}\label{lem 2.2}
Assume $V\in RH_q(\rn)$ with $q\in(n/2,\,\fz)$.
Let $K_t$ be the kernel of the heat semigroup $\{e^{-tL}\}_{t\geq 0}$ and
$q^\ast:=\frac{nq}{n-q}$ if $q\in(n/2,\,n)$, or $q^\ast:=\fz$ if $q\in [n,\,\fz]$.
For any given $p\in[1,\,q^\ast)$ and $N\in\nn$, there exist positive constants
$\az$, $C$ and $c$ such that, for any $y\in\rn$ and $t\in(0,\,\fz)$,
\begin{align}\label{eq 4.20}
\lf[\int_\rn \lf|\nabla_x
\frac{\partial^k}{\partial t^k}K_t(x,\,y)\r|^p
e^{\az\frac{|x-y|^2}{t}}\,dx\r]^{1/p}
\le\frac{C}{t^{1/2+k+n/{2p'}}}\lf[1+\sqrt{t}m(y,\,V)\r]^{-N}.
\end{align}
\end{lemma}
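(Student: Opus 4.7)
The plan is to exploit the elliptic equation satisfied by $v(x):=\partial_t^k K_t(x,y)$ (for fixed $y$ and $t$). Since $\partial_t K_t=-LK_t$, a direct computation yields
\begin{equation*}
Lv=-\partial_t^{k+1}K_t(\cdot,y),
\end{equation*}
and both $v$ and the right-hand side satisfy the pointwise Gaussian bound \eqref{eq 1.1} from Remark~\ref{rem-3}. I would combine this equation with Caccioppoli-type inequalities on dyadic annuli centered at $y$ and sum against the exponential weight $e^{\alpha|x-y|^2/t}$ to derive \eqref{eq 4.20}.

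First, I would treat the case $p=2$. For each $j\in\zz_+$, pick $\phi_j\in C_c^\fz(\rn)$ equal to $1$ on $U_j(B(y,\sqrt{t}))$ (see \eqref{eq-ujb}), supported on a slightly fattened annulus, and satisfying $\|\nabla\phi_j\|_{L^\fz(\rn)}\ls(2^j\sqrt{t})^{-1}$. Testing $Lv=-\partial_t^{k+1}K_t$ against $\phi_j^2 v$, integrating by parts, and using the ellipticity from Assumption~\ref{as 1} together with the non-negativity of $V$, one obtains the Caccioppoli-type bound
\begin{equation*}
\int_\rn\phi_j^2|\nabla v|^2\,dx\ls\int_\rn|v|^2|\nabla\phi_j|^2\,dx+\int_\rn\phi_j^2|v|\lf|\partial_t^{k+1}K_t(\cdot,y)\r|\,dx.
\end{equation*}
Plugging in the pointwise bounds from \eqref{eq 1.1} for $v$ and $\partial_t^{k+1}K_t$, weighting by $e^{2\alpha|x-y|^2/t}\ls e^{C4^j}$ on the support of $\phi_j$, and summing over $j$ while taking $\alpha$ smaller than the Gaussian constant $c$ in \eqref{eq 1.1}, produces \eqref{eq 4.20} for $p=2$. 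The factor $[1+\sqrt{t}\,m(y,V)]^{-N}$ is pulled outside directly from the corresponding $y$-dependent factor in \eqref{eq 1.1}.

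For general $p\in[1,q^\ast)$, the range $p\in[1,2]$ follows from the $p=2$ case by H\"older's inequality on each annulus, using the Gaussian tail to restore integrability. For $p\in(2,q^\ast)$ I would bootstrap via Lemma~\ref{lem k-1}: starting from the $L^2$ gradient bound just established, apply Lemma~\ref{lem k-1} with $f=v$ and $\phi=\phi_j$. Since $L_0 v=Lv-Vv=-\partial_t^{k+1}K_t-Vv$, this yields
\begin{align*}
\lf\|\phi_j\lf|\nabla^2 v\r|\r\|_{L^p(\rn)}
&\ls\lf\|v\lf|\nabla^2\phi_j\r|\r\|_{L^p(\rn)}+\big\||\nabla v||\nabla\phi_j|\big\|_{L^p(\rn)}\\
&\quad+\lf\|\phi_j\partial_t^{k+1}K_t\r\|_{L^p(\rn)}+\lf\|\phi_j Vv\r\|_{L^p(\rn)}.
\end{align*}
Since $V\in RH_q\st L^q_{\rm loc}(\rn)$ controls the last term and the pointwise Gaussian bounds control the others, this estimates $\nabla^2 v$ in weighted $L^p$ (for any $p\le q$). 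Iterating via the local Sobolev embedding $\|\nabla v\|_{L^{p^\ast}(B)}\ls\|\nabla v\|_{L^p(B)}+\|\nabla^2 v\|_{L^p(B)}$ on balls of scale $2^j\sqrt{t}$ promotes the gradient integrability stepwise from $2$ up to $q^\ast$, and the annular summation against the exponential weight closes the argument as in the $L^2$ case.

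The principal obstacle will be the bootstrap from $L^2$ up to $L^{q^\ast}$: each application of Lemma~\ref{lem k-1} rests crucially on Assumption~\ref{as 3} (without which the coercive estimate \eqref{eq k-1} fails for the elliptic part $L_0$) and on the reverse H\"older property of $V$ to absorb the $\phi_j Vv$ term. The remaining delicate point is the bookkeeping in the annular summation: the exponential weight $e^{\alpha|x-y|^2/t}$, the Gaussian factor $e^{-c|x-y|^2/t}$ coming from the pointwise kernel bounds, and the dyadic scale $2^j\sqrt{t}$ of the cutoffs must all be balanced so that, for $\alpha$ sufficiently smaller than $c$, the resulting geometric series converges and reproduces the required $t^{-1/2-k-n/(2p')}$ scaling on the right-hand side of \eqref{eq 4.20}.
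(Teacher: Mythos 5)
The paper does not write out a proof of Lemma~\ref{lem 2.2}: it states only that the argument is ``totally similar'' to those of \cite[Proposition~2.4(b)]{ly16} and \cite[Proposition~3.3]{ly14}, both of which treat $L=-\Delta+V$. Your reconstruction follows exactly the strategy those references use: a Caccioppoli-type energy estimate on dyadic Gaussian annuli for $p=2$, then an upgrade of gradient integrability to the full range $p<q^\ast$ via interior second-order elliptic regularity plus Sobolev embedding, the limiting exponent $q^\ast$ entering because the source term $V\,\partial_t^k K_t$ only lies in $L^q_{\rm loc}$. So your proposal is on the correct track and matches the intended proof in spirit; since the paper hides the details behind a citation, I cannot compare step-by-step, but nothing in your plan conflicts with what the proof of Lemma~\ref{lem 4.1} reveals about the authors' methods.

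Two remarks on details you gloss over. First, replacing the classical $W^{2,p}$ interior estimate for $-\Delta$ (as in Ly) with Lemma~\ref{lem k-1} is a reasonable substitute for the variable-coefficient $L_0=-\div(A\nabla)$, and is consistent with how the paper itself later uses Lemma~\ref{lem k-1} inside the proof of Lemma~\ref{lem 4.1}. Note, though, that using Lemma~\ref{lem k-1} imports Assumption~\ref{as 3} (through the Avellaneda--Lin bound for $\nabla^2L_0^{-1}$), whereas a purely local $W^{2,p}$ interior estimate on annuli needs only the H\"older continuity of Assumption~\ref{as 2}; in the context of Theorem~\ref{thm-4} all three assumptions hold anyway, so this is an inefficiency rather than an error. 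Second, the annular bookkeeping you defer is genuinely the most delicate part: the weight $e^{\az|x-y|^2/t}$ is not comparable to a constant across a single dyadic annulus $U_j(B(y,\sqrt t))$ (it varies by a factor $e^{c'4^j}$), so the Sobolev step must be run on a finer cover of balls of radius comparable to $t/|x-y|$, on which the weight \emph{is} essentially constant, before summing. This is standard, and indeed what Shen and Ly do, but should not be left as a black box; it is precisely where the requirement $\az<c$ and the geometric decay $e^{-(c-\az)4^j}$ earn their keep in producing the scaling factor $t^{-1/2-k-n/(2p')}$.
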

The proof of Lemma \ref{lem 2.2} is totally similar to
those of \cite[Proposition 2.4(b)]{ly16}
\cite[Proposition 3.3]{ly14}, the details being omitted.

\begin{lemma}\label{lem 4.1}
Let $V\in RH_q(\rn)$ with $q\in(n/2,\,\infty)$
and $K_t$ the kernel of the heat semigroup $\{e^{-tL}\}_{t\geq 0}$.

{\rm (i)}
For any given $p\in(1,\,q]$, $k\in\zz_+$ and $N\in\nn$,
there exist positive constants $\az$ and $C$ such that,
for any $t\in(0,\,\fz)$ and $y\in\rn$,
\begin{align}\label{eq 4.10}
\lf[\int_\rn\lf|\nabla_x^2\frac{\partial^k}{\partial t^k}K_t(x,\,y)\r|^p
e^{\az\frac{|x-y|^2}{t}}\,dx\r]^{1/p}
\le\frac{C}{t^{1+k+n/{2p'}}}\lf[1+\sqrt{t}m(y,\,V)\r]^{-N}
\end{align}
and
\begin{align}\label{eq 4.10y}
\lf[\int_\rn\lf|V(x)\frac{\partial^k}{\partial t^k}K_t(x,\,y)\r|^p
e^{\az\frac{|x-y|^2}{t}}\,dx\r]^{1/p}
&\le\frac{C}{t^{1+k+n/{2p'}}}\noz\\
&\hs\times\lf[1+\sqrt{t}m(y,\,V)\r]^{-N}.
\end{align}

{\rm (ii)}
For any given $p\in(1,\,p_0)$ with $p_0$ as in \eqref{eq 4.z}, $k\in\zz_+$ and $N\in\nn$,
there exist positive constants $\az$ and $C$ such that,
for any $t\in(0,\,\fz)$ and $y\in\rn$,
\begin{align*}
\lf[\int_\rn\lf|[V(x)]^{1/2}\nabla_x \frac{\partial^k}{\partial t^k}K_t(x,\,y)\r|^p
e^{\az\frac{|x-y|^2}{t}}\,dx\r]^{1/p}
\le\frac{C}{t^{1+k+n/{2p'}}}\lf[1+\sqrt{t}m(y,\,V)\r]^{-N}.
\end{align*}

\end{lemma}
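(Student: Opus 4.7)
The plan is to prove \eqref{eq 4.10y} first, then \eqref{eq 4.10}, then part (ii), in that order, because the $V$-weighted first estimate will feed into the second-derivative estimate. Throughout I fix $y\in\rn$ and $t\in(0,\fz)$ and work with the dyadic annular cover $\{U_j(B(y,\sqrt t))\}_{j\in\zz_+}$ from \eqref{eq-ujb}; the basic pattern is to obtain $e^{-c'\,4^j}$-decay on each annulus and sum a geometric series.

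For \eqref{eq 4.10y}, on $U_j(B(y,\sqrt t))$ the pointwise bound from Remark \ref{rem-3} gives $|\pa_t^k K_t(x,y)|\ls t^{-k-n/2}e^{-c\,4^j}[1+\sqrt{t}m(y,V)]^{-N'}$ for any $N'\in\nn$. Since $p\le q$, H\"{o}lder's inequality followed by the reverse H\"{o}lder condition $V\in RH_q(\rn)$ replaces $\int_{U_j}V^p$ by $\ls|2^jB(y,\sqrt t)|\,\big(\frac{1}{|2^jB(y,\sqrt t)|}\int_{2^jB(y,\sqrt t)}V\big)^p$, which Lemma \ref{lem aux1} then bounds by a power of $1+\sqrt{t}m(y,V)$ times $t^{n/2-p}2^{j(n-2p)}$. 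Inserting the Gaussian weight $e^{\alpha|x-y|^2/t}\ls e^{\alpha\cdot 4^{j+1}}$, choosing $\alpha$ small relative to $c$, and choosing $N'$ large enough to absorb the polynomial $m(y,V)$-factors, summation of the resulting geometric series in $j$ yields \eqref{eq 4.10y}.

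For \eqref{eq 4.10}, set $f(x):=\pa_t^k K_t(x,y)$. Since $LK_t(\cdot,y)=-\pa_t K_t(\cdot,y)$ and $L=L_0+V$, we have $L_0 f=-\pa_t^{k+1}K_t(\cdot,y)-Vf$. For each $j\in\zz_+$ I choose $\phi_j\in C_c^\fz(\rn)$ equal to $1$ on $U_j(B(y,\sqrt t))$, supported in a slight enlargement, with $|\nab\phi_j|\ls (2^j\sqrt t)^{-1}$ and $|\nab^2\phi_j|\ls (2^j\sqrt t)^{-2}$; interior regularity (Assumption \ref{as 3} gives $C^{1+\az}$ coefficients) ensures $f\in W^{2,p}_{\rm loc}(\rn)$, so Lemma \ref{lem k-1} applies and dominates $\|\phi_j|\nab^2 f|\|_{L^p(\rn)}$ by the sum of $\|f|\nab^2\phi_j|\|_{L^p(\rn)}$, $\||\nab f||\nab\phi_j|\|_{L^p(\rn)}$ and $\|\phi_j L_0 f\|_{L^p(\rn)}$. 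The first term is controlled via Remark \ref{rem-3}, the second via Lemma \ref{lem 2.2} restricted to the enlarged annulus (trading a factor $e^{-c'\,4^j/p}$ against the Gaussian weight of Lemma \ref{lem 2.2}), and the third via Remark \ref{rem-3} for the $\pa_t^{k+1}K_t$ piece together with the already-proven \eqref{eq 4.10y} for the $Vf$ piece. Multiplying by $e^{\alpha|x-y|^2/t}\ls e^{\alpha\cdot 4^{j+1}}$, choosing $\alpha<c'$, and summing the geometric series in $j$ gives \eqref{eq 4.10}.

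For part (ii), I define $r\in(1,\fz)$ by $\frac{1}{p}=\frac{1}{2q}+\frac{1}{r}$; a short calculation with \eqref{eq 4.z} shows $p<p_0$ is equivalent to $r<q^\ast$, so Lemma \ref{lem 2.2} applies at exponent $r$. On each $U_j(B(y,\sqrt t))$ I apply H\"{o}lder's inequality with exponents $2q/p$ and $r/p$: the $V^{1/2}$-factor is handled via the reverse H\"{o}lder condition $V^{1/2}\in RH_{2q}(\rn)$ (inherited from $V\in RH_q(\rn)$) together with Lemma \ref{lem aux1}, while the $\nab_x\pa_t^k K_t$-factor is handled via Lemma \ref{lem 2.2} at exponent $r$. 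Distributing the Gaussian weight so that only a small fraction $\alpha r/p$ of it remains on the gradient factor (keeping it within the admissible range of Lemma \ref{lem 2.2}), the remainder of the weight is absorbed by the annular decay of the $V^{1/2}$-averages; summation in $j$ then gives the claim.

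The main obstacle will be in the middle step: balancing the three terms produced by Lemma \ref{lem k-1} so that, after multiplying by the Gaussian weight and summing over the annular cover, all the auxiliary-function powers $[1+\sqrt{t}m(y,V)]^{\pm}$ arising from Lemma \ref{lem aux1} collapse into a single $[1+\sqrt{t}m(y,V)]^{-N}$ of any prescribed order. This forces a coordinated choice of $\alpha$ and $N'$, and is precisely the step where Assumption \ref{as 3} enters essentially via Lemma \ref{lem k-1}; without it one does not have access to the coercive estimate that trades $\nab^2 f$ against $L_0 f$.
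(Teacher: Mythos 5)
Your proposal is essentially correct and rests on the same ingredients the paper uses: Lemma \ref{lem k-1} applied to $f:=\partial_t^k K_t(\cdot,y)$, the Gaussian and auxiliary-function decay of the heat kernel from Lemma \ref{lem 1-2} and Remark \ref{rem-3}, Lemma \ref{lem 2.2} for the first-order gradient, and the reverse H\"{o}lder property of $V$ via Lemma \ref{lem aux1}. The one organizational difference is how the Gaussian weight enters. The paper takes $\phi:=w_R^t(\cdot,y)=\phi_R^t(\cdot-y)\,e^{\alpha|\cdot-y|^2/t}$ in Lemma \ref{lem k-1}, i.e.\ it builds the exponential weight directly into the test function (relying on the fact that $\nabla e^{\alpha|z|^2/t}$ and $\nabla^2 e^{\alpha|z|^2/t}$ are controlled by $t^{-1/2}e^{\alpha'|z|^2/t}$ and $t^{-1}e^{\alpha'|z|^2/t}$ for a slightly enlarged $\alpha'$), runs a single weighted $L^p$ calculation, and then lets $R\to\infty$; you instead run the annular decomposition $\{U_j(B(y,\sqrt t))\}_j$ with pure cutoffs $\phi_j$, pick up the weight as $e^{\alpha 4^j}$ per annulus, and sum a geometric series. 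These two bookkeeping schemes exchange the same exponential decay and are both valid; yours avoids the weighted test function and the $R\to\infty$ limit at the cost of the annular summation. Likewise in part (ii) you fix the Hölder exponent $r$ via $1/p=1/(2q)+1/r$ (equivalently the endpoint $\frac{pp_1}{2(p_1-p)}=q$), while the paper chooses $p_1$ from an open interval; both lie in the admissible range $p_1<q^*$, $\frac{pp_1}{2(p_1-p)}\le q$, and each reproduces the required $t$-scaling. Your identity $L_0 f=-\partial_t^{k+1}K_t-Vf$ and the order of first proving \eqref{eq 4.10y} then \eqref{eq 4.10} also matches the paper, where \eqref{eq 4.10y} appears as the estimate of the term $\mathrm{I}^{t,2}_{R,3}$.
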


\begin{remark}\label{rem-5}
In particular, when $L=-\Delta+V$ is the Schr\"{o}dinger operator, Lemma \ref{lem 4.1}(i)
is proved in \cite[Proposition 3.3]{ly14}. However,
Lemma \ref{lem 4.1}(ii) is new even in this case.
\end{remark}

\begin{proof}[Proof of Lemma \ref{lem 4.1}]
We first prove (i) by considering two cases.

Case i): $k=0$.
For any $t\in(0,\,\fz)$, choose a function $\phi^t\in C_c^\fz(\rn)$ satisfying that
\begin{enumerate}
\item[(i)] $\supp \phi^t\st B(\vec{0}_n,2\sqrt{t})$,
$\phi^t(x)\equiv 1$ for any $x\in B(\vec{0}_n,\sqrt{t})$,
$|\phi^t(x)|\le 1$ for any $x\in\rn$;
\item[(ii)] there exists a positive constant $c$ such that, for any $x\in\rn$,
$|\nabla\phi^t(x)|\le c/\sqrt{t}$, $|\nabla^2\phi^t(x)|\le c/t.$
\end{enumerate}
For any $R\in(0,\,\fz)$ and $x\in\rn$, let $\phi^t_R(x):=\phi^t(x/R)$.
For any $t,\,R\in(0,\,\fz)$ and $x,\,y\in\rn$, define
\begin{align}\label{eq 4.w}
w^{t}_R(x,y):=\phi^t_R(x-y)e^{\az\frac{|x-y|^2}{t}},
\end{align}
where $\az$ is a positive constant which is determined later.
Then, by a simple calculation,
it is easy to see that $\supp w^t_R(\cdot,\,y)\st B(y,2R\sqrt{t})$
and, for any $R\in[1,\,\fz)$,
\begin{align}\label{eq 4.11}
|w_R^t(x,y)|\le e^{\az\frac{|x-y|^2}{t}},\
\lf|\nabla_x w^t_R(x,y)\r|\le \frac{c}{\sqrt{t}}e^{\az\frac{|x-y|^2}{t}}\ \
\text{and}\ \ \lf|\nabla^2_x w^t_R(x,y)\r|\le\frac{c}{t}e^{\az\frac{|x-y|^2}{t}}.
\end{align}
For any $t\in (0,\,\fz)$, $R\in[1,\,\fz)$ and $y\in\rn$, let
\begin{align*}
{\rm I}_R^t(y):=\lf\|w_R^t(\cdot,\,y)\nabla^2 K_t(\cdot,\,y)\r\|_{L^p(\rn)}.
\end{align*}
To estimate ${\rm I}_R^t(y)$, we make use of Lemma \ref{lem k-1} with $f:=K_t(\cdot,\,y)$
and $\phi:=w_R^t(\cdot,\,y)$ therein. It is obvious that $w_R^t(\cdot,\,y)\in C_c^\fz(\rn)$.
Next we show that, for any $p\in(1,\,q]$, $K_t(\cdot,\,y)\in W^{2,\,p}(\rn)$.
Indeed, by the fact $\nabla^2L^{-1}$ is bounded on $L^p(\rn)$ for any $p\in (1,\,q]$,
$q\in(n/2,\,\fz)$,
(see Theorem \ref{thm-3}(iii))
and $L(K_t(\cdot,\,y))=\frac{\partial}{\partial t}K_t(\cdot,\,y)\in L^p(\rn)$,
we find that
\begin{align*}
\lf\|\nabla^2 K_t(\cdot,\,y)\r\|_{L^p(\rn)}=\lf\|\nabla^2L^{-1}LK_t(\cdot,\,y)\r\|_{L^p(\rn)}
\ls\lf\|\frac{\partial}{\partial t}K_t(\cdot,\,y)\r\|_{L^p(\rn)}<\fz.
\end{align*}
Thus, $K_t(\cdot,\,y)\in W^{2,\,p}(\rn)$.
From this and Lemma \ref{lem k-1}, we deduce that
\begin{align*}
{\rm I}^t_R(y)
&\ls\lf\|\nabla^2 w^t_R(\cdot,\,y)|K_t(\cdot,\,y)|\r\|_{L^p(\rn)}
+\lf\|\lf|\nabla w^t_R(\cdot,\,y)\r|\lf|\nabla K_t(\cdot,\,y)\r|\r\|_{L^p(\rn)}\\
&\hs+\lf\|w^t_R(\cdot,\,y)L_0(K_t(\cdot,\,y))\r\|_{L^p(\rn)}\\
&=:{\rm I}^t_{R,1}(y)+{\rm I}^t_{R,2}(y)+{\rm I}^t_{R,3}(y).
\end{align*}

For ${\rm I}_{R,1}^t$, by \eqref{eq 4.11} and \eqref{eq Gauss1},
we conclude that, for any given $N\in\nn$,
\begin{align}\label{eq 4.18}
{\rm I}_{R,1}^t(y)
&=\lf[\int_\rn\lf|\nabla_x^2w^t_R(x,y)\r|^p|K_t(x,\,y)|^p\,dx\r]^{1/p}\noz\\
&\ls\frac{1}{t^{1+n/2}}\lf[1+\sqrt{t}m(y,\,V)\r]^{-N}
\lf[\int_\rn e^{(\az-cp)\frac{|x-y|^2}{t}}\,dx\r]^{1/p}\noz\\
&\ls\frac{1}{t^{1+n/{2p'}}}\lf[1+\sqrt{t}m(y,\,V)\r]^{-N}.
\end{align}

To estimate ${\rm I}_{R,2}^t$, by Lemma \ref{lem 2.2}(i), we know that,
for any given $N\in\nn$,
there exist positive constants $C,\az_0,c$ such that,
for any $t\in(0,\,\fz)$ and $y\in\rn$,
\begin{align*}
\lf[\int_\rn\lf|\nabla_x K_t(x,\,y)\r|^pe^{\az_0\frac{|x-y|^2}{t}}\,dx\r]^{\frac1p}
\le\frac{C}{t^{\frac12+\frac{n}{2p'}}}\lf[1+\sqrt{t}m(y,\,V)\r]^{-N}.
\end{align*}
From this and \eqref{eq 4.11}, we deduce that
\begin{align}\label{eq 4.14}
{\rm I}_{R,2}^t(y)
&=\lf[\int_\rn\lf|\nabla_x w^t_R(x,y)\r|^p\lf|\nabla_x K_t(x,\,y)\r|^p\,dx\r]^{1/p}\noz\\
&\ls\frac{1}{t^{1/2}}\lf[\int_\rn\lf|\nabla_x K_t(x,\,y)\r|^p e^{\az\frac{|x-y|^2}{t}}\,dx\r]^{1/p}\noz\\
&\ls\frac{1}{t^{1+n/{2p'}}}\lf[1+\sqrt{t}m(y,\,V)\r]^{-N},
\end{align}
where the positive constant $\az$ as in \eqref{eq 4.w} is chosen small enough
such that $\az<\az_0$.

For ${\rm I}_{R,3}^t$, by the fact that $L=L_0+V$ and
$L(K_t(\cdot,\,y))=\frac{\partial}{\partial t}K_t(\cdot,\,y)$, we find that
\begin{align}\label{eq 4.17x}
{\rm I}_{R,3}^t(y)
&=\lf[\int_\rn \lf|w_R^t(x,y)\r|^p|(L-V)(K_t(\cdot,\,y))(x)|^2\,dx\r]^{1/p}\noz\\
&\le\lf[\int_\rn \lf|w_R^t(x,y)\r|^p\lf|\frac{\partial}{\partial t}K_t(x,\,y)\r|^p\,dx\r]^{1/p}\noz\\
&\hs+\lf[\int_\rn \lf|w_R^t(x,y)\r|^p|V(x)K_t(x,\,y)|^p\,dx\r]^{1/p}\noz\\
&={\rm I}_{R,3}^{t,1}(y)+{\rm I}_{R,3}^{t,2}(y).
\end{align}
It follows from \eqref{eq 4.11} and \eqref{eq 1.1} that
\begin{align}\label{eq 4.17}
{\rm I}_{R,3}^{t,1}(y)
&\ls\frac{1}{t^{1+n/2}}\lf[1+\sqrt{t}m(y,\,V)\r]^{-N}
\lf[\int_\rn e^{p(a-c)\frac{|x-y|^2}{t}}\,dx\r]^{1/p}\noz\\
&\ls\frac{1}{t^{1+n/{2p'}}}\lf[1+\sqrt{t}m(y,\,V)\r]^{-N},
\end{align}
where the positive constant $c$ is as in \eqref{eq 1.1} and $\az$ is chosen small
enough such that $\az<c$.

For ${\rm I}_{R,3}^{t,2}$, by \eqref{eq 4.11} and \eqref{eq Gauss1},
we know that
\begin{align}\label{eq 4.15}
{\rm I}_{R,3}^{t,2}(y)
&\ls\frac{1}{t^{n/2}}\lf[1+\sqrt{t}m(y,\,V)\r]^{-N}
\lf\{\int_\rn [V(x)]^p e^{p(\az-c)\frac{|x-y|^2}{t}}\,dx\r\}^{1/p}\noz\\
&\ls\frac{1}{t^{n/2}}\lf[1+\sqrt{t}m(y,\,V)\r]^{-N}\noz\\
&\hs\times\sum_{j=0}^\fz\lf\{\int_{U_j(B(y,\sqrt{t}))}
[V(x)]^pe^{-pc_0\frac{|x-y|^2}{t}}\,dx\r\}^{1/2},
\end{align}
where $c_0:=(c-\az)\in(0,\,\fz)$ and $U_j(B(y,\,\sqrt{t}))$ is as in \eqref{eq-ujb}.
Since $V\in RH_q(\rn)$ and $p\in(1,\,q]$,
we know that $V\in RH_p(\rn)$ and there exists some $p_0\in[1,\,\fz)$ such
that $V\in A_{p_0}(\rn)$.
By this, we find that, for any $j\in\zz_+$,
\begin{align}\label{eq 4.16}
&\lf\{\int_{U_j(B(y,\sqrt{t}))}[V(x)]^pe^{-pc_0\frac{|x-y|^2}{t}}\,dx\r\}^{1/p}\noz\\
&\hs\le e^{-c_02^{2j}}|B(y,2^j\sqrt{t})|^{\frac1p}
\lf\{\frac{1}{|B(y,2^j\sqrt{t})|}\int_{B(y,2^j\sqrt{t})}[V(x)]^p\,dx\r\}^{1/p}\noz\\
&\hs\ls e^{-c_02^{2j}}|B(y,2^j\sqrt{t})|^{-\frac1{p'}}\int_{B(y,2^j\sqrt{t})}[V(x)]\,dx\noz\\
&\hs\ls e^{-c_02^{2j}}2^{-jn/p'}2^{jp_0n}\frac{1}{t^{1-\frac{n}{2p}}}
\frac{1}{t^{\frac{n}{2}-1}}\int_{B(y,\sqrt{t})} V(x)\,dx,
\end{align}
where, in the last inequality, we use the fact that $V\in A_{p_0}(\rn)$
is a doubling measure (see, for example, \cite[p.\,196]{s93}), namely, there exists a positive
constant $C$ such that, for any ball $B$ of $\rn$, $V(2B)\le C2^{p_0n}V(B)$.
If $\sqrt{t}m(y,\,V)<1$, then, by Lemma \ref{lem aux1}(i),
we know that
\begin{align*}
\frac{1}{t^{\frac{n}{2}-1}}\int_{B(y,\sqrt{t})} V(x)\,dx
\ls \lf[\sqrt{t}m(y,\,V)\r]^{2-\frac{n}{q}}\ls1.
\end{align*}
If $\sqrt{t}m(y,\,V)\geq1$, then, by Lemma \ref{lem aux1}(ii),
we have
\begin{align*}
\frac{1}{t^{\frac{n}{2}-1}}\int_{B(y,\sqrt{t})} V(x)\,dx
\ls \lf[\sqrt{t}m(y,\,V)\r]^{k_0},
\end{align*}
where $k_0\in(0,\,\fz)$ is as in Lemma \ref{lem aux1}(ii).
From this, \eqref{eq 4.15} and \eqref{eq 4.16}, it follows that
\begin{align}\label{eq 4.17y}
{\rm I}_{R,3}^{t,2}(y)
&\ls\frac{1}{t^{n/2}}\lf[1+\sqrt{t}m(y,\,V)\r]^{-N}\frac{1}{t^{1-\frac{n}{2p}}}
\lf\{1+\lf[\sqrt{t}m(y,\,V)\r]^{k_0}\r\}\noz\\
&\hs\times\sum_{j=0}^\fz e^{-c_02^{2j}}2^{-jn/p}2^{jp_0n}\noz\\
&\ls\frac{1}{t^{1+n/{2p'}}}\lf[1+\sqrt{t}m(y,\,V)\r]^{-(N-k_0)}.
\end{align}
This, combined with \eqref{eq 4.17}, implies that
\begin{equation*}
{\rm I}_{R,3}^t(y)\ls\frac{1}{t^{1+n/{2p'}}}\lf[1+\sqrt{t}m(y,\,V)\r]^{-(N-k_0)}.
\end{equation*}
By this, \eqref{eq 4.14} and \eqref{eq 4.18}, we further conclude that
\begin{align*}
\lf\|w_R^t(\cdot,\,y)\nabla^2 K_t(\cdot,\,y)\r\|_{L^p(\rn)}
={\rm I}_R^t(y)
\ls\frac{1}{t^{1+n/{2p'}}}\lf[1+\sqrt{t}m(y,\,V)\r]^{-(N-k_0)},
\end{align*}
where the implicit positive constant is independent of $R$, $t$ and $y$.
Noticing that $\supp\phi_R^t(\cdot/R)\st B(\vec{0}_n,\,2R\sqrt{t})$,
via letting $R\to\fz$, we obtain
\begin{align*}
\lf[\int_\rn \lf|\nabla_x^2K_t(x,\,y)\r|^p e^{\az\frac{|x-y|^2}{t}}\,dx\r]^{1/p}
&=\lim_{R\to\fz}{\rm I}_{R}^t(y)\\
&\ls\frac{1}{t^{1+n/{2p'}}}\lf[1+\sqrt{t}m(y,\,V)\r]^{-(N-k_0)}.
\end{align*}

For \eqref{eq 4.10y}, we have
\begin{align*}
\lf[\int_\rn\lf|V(x)\frac{\partial^k}{\partial t^k}K_t(x,\,y)\r|^p
e^{\az\frac{|x-y|^2}{t}}\,dx\r]^{1/p}
&=\lim_{R\to\fz}{\rm I}_{R,\,3}^{t,\,2}(y)\\
&\ls\frac{1}{t^{1+n/{2p'}}}[1+\sqrt{t}m(y,\,V)]^{-(N-k_0)},
\end{align*}
where ${\rm I}_{R,\,3}^{t,\,2}(y)$ is as in \eqref{eq 4.17x}
and the last inequality follows from \eqref{eq 4.17y}.
Observing $N\in\nn$ is arbitrary, we know that \eqref{eq 4.10} and
\eqref{eq 4.10y} hold true for any given $N\in\nn$.
This finishes the proof of (i) for $k=0$.

Case ii): $k\in\nn$. In this case, by \eqref{eq 1.1} and an argument similar to
that used in the proof of \cite[Proposition 3.3]{ly14},
we conclude that \eqref{eq 4.10} and \eqref{eq 4.10y} hold true,
which completes the proof of (i).

Next, we prove (ii). If $q\in(n/2,n)$, then $p\in(1,\frac{2qn}{3n-2q})$
and we could choose some positive constant $p_1\in(p,\frac{nq}{n-q})$
such that $\frac{pp_1}{2(p_1-p)}<q$.
If $q\in[n,\,\fz)$, then $p\in(1,\,2q)$
and  we could choose some positive constant $p_1\in(p,\fz)$ such that $\frac{pp_1}{2(p_1-p)}<q$.
By this, the H\"{o}lder inequality and \eqref{eq 4.20}, we have
\begin{align}\label{eq 4.16x}
&\lf\{\int_\rn\lf|[V(x)]^{1/2}\nabla_x \frac{\partial^k}{\partial t^k}K_t(x,\,y)\r|^p
e^{\az\frac{|x-y|^2}{t}}\,dx\r\}^{1/p}\noz\\
&\hs\le\lf\{\int_\rn \lf|\nabla_x \frac{\partial^k}{\partial t^k}K_t(x,\,y)\r|^{p_1}
e^{\frac{2\az p_1}{p}\frac{|x-y|^2}{t}}\,dx\r\}^{\frac1{p_1}}\noz\\
&\hs\hs\times\lf\{\int_\rn\lf([V(x)]^{p/2}e^{-\az\frac{|x-y|^2}{t}}\r)^{(p_1/p)'}\,dx
\r\}^{\frac1{p}-\frac{1}{p_1}}\noz\\
&\hs\ls\frac{1}{t^{1/2+k+n/{2p_1'}}}\lf[1+\sqrt{t}m(y,\,V)\r]^{-N}\noz\\
&\hs\hs\times\sum_{j=0}^\fz e^{-c_02^{2j}}
\lf\{\int_{U_j(B(y,\,\sqrt{t}))}[V(x)]^{\frac{pp_1}{2(p_1-p)}}
\,dx\r\}^{\frac{1}{p}-\frac1{p_1}}\noz\\
&\hs\ls\frac{1}{t^{1/2+k+n/{2p_1'}}}\lf[1+\sqrt{t}m(y,\,V)\r]^{-N}
\sum_{j=0}^\fz e^{-c_02^{2j}}\lf|B(y,\,2^j\sqrt{t})\r|^{\frac1{p}-\frac{1}{p_1}}\noz\\
&\hs\hs\times\lf\{\frac{1}{|B(y,2^j\sqrt{t})|}
\int_{B(y,\,2^j\sqrt{t})}[V(x)]^{\frac{pp_1}{2(p_1-p)}}\,dx\r\}^{\frac{1}{p}-\frac1{p_1}}.
\end{align}
By the fact that $V\in RH_q(\rn)$, we know that there exists some
$p_0\in[1,\,\fz)$ such that $V\in A_{p_0}(\rn)$.
From this, the fact that $RH_q(\rn)\st RH_{\frac{pp_1}{2(p_1-p)}}(\rn)$
and an argument similar to that used in \eqref{eq 4.16}, we deduce that
\begin{align*}
&\sum_{j=0}^\fz e^{-c_02^{2j}}\lf|B(y,\,2^j\sqrt{t})\r|^{\frac1{p}-\frac{1}{p_1}}
\lf\{\frac{1}{|B(y,\,2^j\sqrt{t})|}
\int_{B(y,\,2^j\sqrt{t})}[V(x)]^{\frac{pp_1}{2(p_1-p)}}\,dx\r\}^{\frac{1}{p}-\frac1{p_1}}\\
&\hs\ls\sum_{j=0}^\fz e^{-c_02^{2j}}\lf|B(y,\,2^j\sqrt{t})\r|^{\frac1{p}-\frac{1}{p_1}}
\lf\{\frac{1}{|B(y,\,2^j\sqrt{t})|}
\int_{B(y,\,2^j\sqrt{t})}V(x)\,dx\r\}^{1/2}\\
&\hs\ls\sum_{j=0}^\fz e^{-c_02^{2j}}2^{-jn(\frac12-\frac{p_1-p}{p_1p})}2^{jnp_0/2}
t^{-[\frac12-\frac{n(p_1-p)}{2p_1p}]}
\lf[\frac{1}{t^{\frac{n}{2}-1}}\int_{B(y,\,\sqrt{t})}V(x)\,dx\r]^{1/2}\\
&\hs\ls t^{-[\frac12-\frac{n(p_1-p)}{2p_1p}]}.
\end{align*}
This, combined with \eqref{eq 4.16x},
finishes the proof of Lemma \ref{lem 4.1}.
\end{proof}

To prove Theorem \ref{thm-4},
we also need the following atomic decomposition of $\vhp$ (see \eqref{eq atom} below),
which is established in \cite[Proposition 5.12]{yz17}.
\begin{definition}[\cite{yz17}]\label{def atom-2}
Let $p(\cdot)\in\cp(\rn)$ with $p_+\in(0,1]$ and $M\in\nn$.
A funtion $a\in L^2(\rn)$ is called an $(p(\cdot),\,M)_L$-atom,
associated with $L$, if there exists a function $b\in \cd(L^M)$ and
a ball $B:=B(x_B,\,r_B)$ of $\rn$ such that $a=L^M(b)$ and,
for any $k\in\{0,\,\cdots,\,M\}$,
\begin{enumerate}
\item[(i)]$\supp L^k(b)\st B$;
\item[(ii)]$\|(r_B^2L)^k(b)\|_{L^2(\rn)}\le r_B^{2M}|B|^{1/2}\|\chi_B\|_{L^{p(\cdot)}(\rn)}^{-1}$.
\end{enumerate}
\end{definition}

In what follows, for any $p(\cdot)\in\cp(\rn)$ with $0<p_-\le p_+\le 1$,
any sequences $\{\lz_j\}_{j\in\nn}\st\cc$ and $\{B_j\}_{j\in\nn}$ of balls in $\rn$, define
\begin{equation}\label{eq norm-2}
\ca(\{\lz_j\}_{j\in\nn},\,\{B_j\}_{j\in\nn}):=\lf\|\lf\{\sum_{j\in\nn}
\lf[\frac{|\lz_j|\chi_{B_j}}{\|\chi_{B_j}\|_{\vp}}\r]^{p_-}\r\}^{\frac1{p_-}}\r\|_{\vp}.
\end{equation}

Let $p(\cdot)\in C^{\log}(\rn)$ with $p_+\in(0,\,1]$ and
$M\in \nn\cap(\frac{n}{2}[\frac1{p_-}-\frac12],\,\fz)$.
Then, from the fact that $L=-\div(A\nabla)+V$ is a nonnegative self-adjoint operator
and \cite[Proposition 5.12]{yz17},
we deduce that, for any $f\in\vhp\cap L^2(\rn)$, there exists a sequence
$\{\lz_j\}_{j\in\nn}\st\cc$ and a family $\{a_j\}_{j\in\nn}$ of
$(p(\cdot),\,M)_L$-atoms, associated with balls $\{B_j\}_{j\in\nn}$ of $\rn$, such that
\begin{align}\label{eq atom}
f=\sum_{j=1}^\fz \lz_ja_j\ \ \text{in}\ \ L^2(\rn)\quad\text{and}\quad
\ca(\{\lz_j\}_{j\in\nn},\,\{B_j\}_{j\in\nn})\sim\|f\|_{\vhp},
\end{align}
where the implicit positive constants are independent of $f$.

The following lemma shows that the above atomic decomposition of $\vhp$ allows one to reduce
the study of the boundedness of operators on $\vhp$ to studying
their behaviours on single atoms.
\begin{lemma}\label{lem 4.2}
Let $L$ be as in \eqref{eq o}
and $p(\cdot)\in C^{\log}(\rn)$ with $p_+\in(0,\,1]$.
Suppose $T$ is a linear operator, or a positive sublinear operator, which
is bounded on $L^2(\rn)$.
Let $M\in\nn\cap(\frac n2[\frac{1}{p_-}-\frac12],\,\fz)$.
Assume that there exist positive constants $C$ and $\tz\in(n[\frac{1}{p_-}-\frac12],\,\fz)$
such that, for any $(p(\cdot),\,M)_L$-atom $a$, associated with ball $B$ of $\rn$,
and $i\in\zz_+$,
\begin{align}\label{eq 4.26}
\|T(a)\|_{L^2(U_i(B))}\le C2^{-i\tz}|B|^{\frac12}\|\chi_B\|_{\vp}^{-1}.
\end{align}
Then there exists a positive constant $C$ such that, for any $f\in\vhp$,
\begin{align}\label{eq 4.25}
\|T(f)\|_{\vp}\le C\|f\|_{\vhp}.
\end{align}
\end{lemma}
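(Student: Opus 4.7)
\textbf{Proof plan for Lemma \ref{lem 4.2}.}
The plan is to reduce the bound to the single-atom estimate \eqref{eq 4.26} via the atomic decomposition \eqref{eq atom}, then handle the $\vp$-quasi-norm using a $p_-$-th power trick and Lemma \ref{lem-key}. By a standard density argument (since $\vhp\cap L^2(\rn)$ is dense in $\vhp$), it suffices to prove \eqref{eq 4.25} for $f\in\vhp\cap L^2(\rn)$. For such $f$, choose an atomic decomposition $f=\sum_{j\in\nn}\lz_j a_j$ in $L^2(\rn)$ with $\ca(\{\lz_j\},\{B_j\})\ls\|f\|_{\vhp}$ as in \eqref{eq atom}. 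Since $T$ is bounded on $L^2(\rn)$ and is either linear or positive sublinear (countably so), we have, pointwise a.e.,
\begin{align*}
|T(f)(x)|\le\sum_{j=1}^\fz|\lz_j||T(a_j)(x)|
=\sum_{j=1}^\fz\sum_{i=0}^\fz|\lz_j||T(a_j)(x)|\chi_{U_i(B_j)}(x).
\end{align*}

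Next, for each $(j,i)$ I would renormalize the annular piece of $T(a_j)$ into an $L^2$-molecule. Let $\mu_{j,i}:=C2^{-i\tz}\|\chi_{B_j}\|_{\vp}^{-1}$ (with $C$ as in \eqref{eq 4.26}) and $b_{j,i}:=T(a_j)\chi_{U_i(B_j)}/\mu_{j,i}$. Then $\supp b_{j,i}\st U_i(B_j)\st 2^{i+1}B_j$ and, by \eqref{eq 4.26}, $\|b_{j,i}\|_{L^2(\rn)}\le|B_j|^{1/2}$. Using $p_-\le 1$, the inequality $(\sum_k\az_k)^{p_-}\le\sum_k\az_k^{p_-}$ for $\az_k\ge0$ yields
\begin{align*}
|T(f)(x)|^{p_-}\le\sum_{j=1}^\fz\sum_{i=0}^\fz|\lz_j\mu_{j,i}|^{p_-}|b_{j,i}(x)|^{p_-}.
\end{align*}
Then, by Remark \ref{rem-1}(ii), $\|\cdot\|_\vp^{p_-}=\|(\cdot)^{p_-}\|_{L^{p(\cdot)/p_-}(\rn)}$, and $L^{p(\cdot)/p_-}(\rn)$ is a \emph{normed} space because $p(\cdot)/p_-\ge 1$. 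Applying its triangle inequality to split the sum over $i$,
\begin{align*}
\|T(f)\|_\vp^{p_-}
\le\sum_{i=0}^\fz\lf\|\lf(\sum_{j=1}^\fz|\lz_j\mu_{j,i}b_{j,i}|^{p_-}\r)^{1/p_-}\r\|_\vp^{p_-}.
\end{align*}

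Finally, for each fixed $i\in\zz_+$, I would invoke Lemma \ref{lem-key} with $\kappa:=2^{i+1}$ and $r:=2$ applied to $\{b_{j,i}\}_j$, which are supported in $2^{i+1}B_j$ with $\|b_{j,i}\|_{L^2(\rn)}\le|B_j|^{1/2}$, to obtain
\begin{align*}
\lf\|\lf(\sum_{j=1}^\fz|\lz_j\mu_{j,i}b_{j,i}|^{p_-}\r)^{1/p_-}\r\|_\vp
\ls 2^{in(1/p_--1/2)}\lf\|\lf(\sum_{j=1}^\fz|\lz_j\mu_{j,i}\chi_{B_j}|^{p_-}\r)^{1/p_-}\r\|_\vp.
\end{align*}
Substituting $\mu_{j,i}=C2^{-i\tz}\|\chi_{B_j}\|_\vp^{-1}$ produces precisely $2^{-i\tz}\ca(\{\lz_j\},\{B_j\})$ (see \eqref{eq norm-2}) on the right. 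Raising to the $p_-$ and summing over $i$ gives a geometric series $\sum_{i=0}^\fz 2^{ip_-[n(1/p_--1/2)-\tz]}$, whose convergence is guaranteed by the assumption $\tz>n[1/p_--1/2]$. Thus $\|T(f)\|_\vp\ls\ca(\{\lz_j\},\{B_j\})\ls\|f\|_\vhp$, as desired.

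The main obstacle I expect is the careful bookkeeping with the variable exponent quasi-norm: specifically, justifying the $p_-$-th power subadditivity via Remark \ref{rem-1}(ii) to reduce to the norm inequality in $L^{p(\cdot)/p_-}(\rn)$, and matching the exponent $\tz>n[1/p_--1/2]$ in the hypothesis exactly with the gap $n(1/p_--1/2)$ produced by Lemma \ref{lem-key} so that the geometric series over $i$ converges.
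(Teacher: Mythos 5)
Your proposal is correct and follows essentially the same route the paper points to (the proof of \cite[Corollary 3.16]{yzz15}): decompose $T(a_j)$ over the annuli $U_i(B_j)$, renormalize each piece into a function supported in $2^{i+1}B_j$ with $L^2$ bound $|B_j|^{1/2}$, pass to the $p_-$-th power and the $L^{p(\cdot)/p_-}$ norm, apply Lemma \ref{lem-key} with $\kappa=2^{i+1}$ and $r=2$ for each fixed $i$, and sum the resulting geometric series, which converges precisely because $\tz>n(1/p_--1/2)$. The only point worth flagging is the pointwise a.e.\ bound $|T(f)|\le\sum_j|\lz_j||T(a_j)|$: for linear $T$ this follows from $L^2$-boundedness by passing to an a.e.-convergent subsequence, while for a positive sublinear $T$ one should note (as you do parenthetically) that countable sublinearity is being used.
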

The proof of Lemma \ref{lem 4.2} is similar to that of \cite[Corollary 3.16]{yzz15},
the details being omitted.

We now prove Theorem \ref{thm-4}.
\begin{proof}[Proof of Theorem \ref{thm-4}]
We first prove \eqref{eq-t2}.
By the fact that $q>\max\{n/2,2\}$ and Theorem \ref{thm-3}(i),
we find that $VL^{-1}$ is bounded on $L^2(\rn)$.
By this and Lemma \ref{lem 4.2}, to prove \eqref{eq-t2},
it suffices to show that
there exist positive constants $C$ and $\tz\in(n[\frac{1}{p_-}-\frac12],\,\fz)$
such that, for any $(p(\cdot),\,M)_L$-atom $a$ with
$M\in\nn\cap(\frac n2[\frac{1}{p_-}-\frac12],\,\fz)$,
associated with ball $B:=B(x_B,r_B)$ of $\rn$,
and any $i\in\zz_+$,
\begin{align}\label{eq 4.y}
\lf\|VL^{-1}(a)\r\|_{L^2(U_i(B))}\le C2^{-i\tz}|B|^{\frac12}\|\chi_B\|_{\vp}^{-1}.
\end{align}
For any $i\in\{0,\,\ldots,\,10\}$, since $VL^{-1}$ is bounded on $L^2(\rn)$,
we know that
\begin{equation}\label{eq 4.x}
\lf\|VL^{-1}(a)\r\|_{L^2(U_i(B))}\ls\|a\|_{L^2(B)}\ls|B|^{1/2}\|\chi_B\|_{\vp}^{-1}.
\end{equation}
For any $i\in\nn$ and $i\geq 11$, from the fact that
$L^{-1}=\int_0^\fz e^{-tL}\,dt$, we deduce that
\begin{align}\label{eq 4.4x}
\lf\|VL^{-1}(a)\r\|_{L^2(U_i(B))}
&\le\lf\|\int_0^{r_B^2}V(\cdot) e^{-tL}(a)(\cdot)\,dt\r\|_{L^2(U_i(B))}\noz\\
&\hs+\lf\|\int_{r_B^2}^\fz V(\cdot) e^{-tL}(a)(\cdot)\,dt\r\|_{L^2(U_i(B))}\noz\\
&=:{\rm I}_i+{\rm II}_i.
\end{align}

We first estimate ${\rm I}_i$. By the Minkowski inequality,
we find that
\begin{align}\label{eq 4.4}
{\rm I}_i
&=\lf[\int_{U_i(B)}\lf|\int_0^{r_B^2}V(x)e^{-tL}(a)(x)\,dt\r|^2\,dx\r]^{1/2}\noz\\
&\le\int_0^{r_B^2}\lf[\int_{U_i(B)}\lf|V(x)e^{-tL}(a)(x)\r|^2\,dx\r]^{1/2}\,dt.
\end{align}
From the Minkowski inequality and the fact that $\supp a\st B$ (see Definition \ref{def atom-2}),
we further deduce that,
\begin{align}\label{eq 4.5x}
&\lf[\int_{U_i(B)}\lf|V(x)e^{-tL}(a)(x)\r|^2\,dx\r]^{1/2}\noz\\
&\hs=\lf[\int_{U_i(B)}\lf|\int_B V(x)K_t(x,\,y)a(y)\,dy\r|^2\,dx\r]^{1/2}\noz\\
&\hs\le\int_B |a(y)|\lf[\int_{U_i(B)}\lf|V(x)K_t(x,\,y)\r|^2\,dx\r]^{1/2}\,dy\noz\\
&\hs\le\int_B |a(y)|\lf[\int_{|x-y|\geq 2^{i-2}r_B}\lf|V(x)K_t(x,\,y)\r|^2\,dx\r]^{1/2}\,dy.
\end{align}
By applying Lemma \ref{lem 4.1}(i) with $k=0$ and $p=2$ in \eqref{eq 4.10y},
we obtain
\begin{align}\label{eq 4.5b}
&\lf[\int_{|x-y|\geq 2^{i-2}r_B}\lf|V(x)K_t(x,\,y)\r|^2\,dx\r]^{1/2}\noz\\
&\hs=\lf[\int_{|x-y|\geq 2^{i-2}r_B}\lf|V(x)K_t(x,\,y)\r|^2
e^{\az\frac{|x-y|^2}{t}}e^{-\az\frac{|x-y|^2}{t}}\,dx\r]^{1/2}\noz\\
&\hs\le e^{-\frac{\az}{8}\frac{2^{2i}r_B^2}{t}}
\lf[\int_{|x-y|\geq 2^{i-2}r_B}\lf|V(x)K_t(x,\,y)\r|^2
e^{\az\frac{|x-y|^2}{t}}\,dx\r]^{1/2}\noz\\
&\hs\ls e^{-\frac{\az}{8}\frac{2^{2i}r_B^2}{t}}\frac{1}{t^{1+n/4}},
\end{align}
where $\az\in(0,\,\fz)$ is as in \eqref{eq 4.10}.
By this, \eqref{eq 4.5x}, the H\"{o}lder inequality and Definition \ref{def atom-2},
we conclude that
\begin{align*}
&\lf[\int_{U_i(B)}\lf|V(x)e^{-tL}(a)(x)\r|^2\,dx\r]^{1/2}\\
&\hs\ls\int_B|a(y)|\frac{1}{t^{1+n/4}}e^{-\frac{\az}{8}\frac{2^{2i}r_B^2}{t}}\,dy
\ls t^{-(1+\frac{n}4)}e^{-\frac{\az}{8}\frac{4^ir_B^2}{t}}\|a\|_{L^2(B)}|B|^{1/2}\\
&\hs\ls t^{-(1+\frac{n}4)}e^{-\frac{\az}{8}\frac{4^ir_B^2}{t}}|B|\|\chi_B\|_{\vp}^{-1}.
\end{align*}
This, combined with \eqref{eq 4.4}, implies that
\begin{align}\label{eq 4.5}
{\rm I}_i
&\ls\int_0^{r_B^2} t^{-(1+\frac{n}4)}e^{-\frac{\az}{8}\frac{4^ir_B^2}{t}}
\|\chi_B\|_{\vp}^{-1}\,dt\noz\\
&\ls|B|\|\chi_B\|_{\vp}^{-1}\int_0^{r_B^2}\lf(\frac{t}{4^ir_B^2}\r)^N t^{-(1+\frac{n}{4})}\,dt
\ls 2^{-2iN}|B|^{1/2}\|\chi_B\|_{\vp}^{-1},
\end{align}
where $N$ is a positive constant large enough such that $N>\frac{n}{4}$,
which is determined later.

For ${\rm II}_i$,
from the Minkowski inequality, we deduce that
\begin{align}\label{eq 4.5y}
{\rm II}_i
&=\lf[\int_{U_i(B)}\lf|\int_{r_B^2}^\fz V(x)e^{-tL}(a)(x)\,dt\r|^2\,dx\r]^{1/2}\noz\\
&\le\int_{r_B^2}^\fz\lf[\int_{U_i(B)}\lf|V(x)e^{-tL}(a)(x)\r|^2\,dx\r]^{1/2}\,dt.
\end{align}
Moreover, by ${\rm (i)}$ of Definition \ref{def atom-2}, we have
\begin{equation*}
e^{-tL}(a)=e^{-tL}(L^M(b))=L^M e^{-tL}(b)=(-1)^M\frac{\partial^M}{\partial t^M}e^{-tL}(b).
\end{equation*}
By this, the Minkowski inequality, Lemma \ref{lem 4.1}(i)
and an argument similar to that used in \eqref{eq 4.5b}, we know that
\begin{align*}
&\lf[\int_{U_i(B)}\lf|V(x)e^{-tL}(a)(x)\r|^2\,dx\r]^{1/2}\\
&\hs=\lf[\int_{U_i(B)}\lf|\int_B V(x)
\lf(\frac{\partial^M}{\partial t^M}K_t(x,\,y)\r)b(y)\,dy\r|^2\,dx\r]^{1/2}\\
&\hs\le\int_B|b(y)|\lf[\int_{|x-y|\geq 2^{i-2}r_B}\lf|V(x)
\lf(\frac{\partial^M}{\partial t^M}p_t(x,y)\r)\r|^2\,dx\r]^{1/2}\,dy\\
&\hs\ls\int_B|b(y)|t^{-(1+\frac{n}{4}+M)}e^{-c\frac{4^ir_B^2}{t}}\,dy
\ls t^{-(1+\frac{n}4+M)}e^{-c\frac{4^ir_B^2}{t}}\|b\|_{L^2(B)}|B|^{1/2}\\
&\hs\ls t^{-(1+\frac{n}4+M)}e^{-c\frac{4^ir_B^2}{t}}r_B^{2M}|B|\|\chi_B\|_{\vp}^{-1}.
\end{align*}
From this and \eqref{eq 4.5y}, we deduce that
\begin{align*}
{\rm II}_i
&\ls |B|\|\chi_B\|_{\vp}^{-1}4^{-iM}\int_{r_B^2}^\fz t^{-(1+\frac{n}4)}
\lf(\frac{4^ir_B^2}{t}\r)^Me^{-c\frac{4^ir_B^2}{t}}\,dt\\
&\ls 2^{-2iM}|B|^{1/2}\|\chi_B\|_{\vp}^{-1}.
\end{align*}
By this, \eqref{eq 4.5} and \eqref{eq 4.4x}, we find that
\begin{align*}
\lf\|VL^{-1}(a)\r\|_{L^2(U_i(B))}
&\ls \lf[2^{-2iN}+2^{-2iM}\r]|B|^{1/2}\|\chi_B\|_{\vp}^{-1}\\
&\ls 2^{-i\tz}|B|^{1/2}\|\chi_B\|_{\vp}^{-1},
\end{align*}
where $\tz:=\min\{2N,\,2M\}$. By choosing $N>\frac{n}2(\frac{1}{p_-}-\frac12)$ and
the fact that $M\in\nn\cap(\frac{n}2[\frac{1}{p_-}-\frac12],\fz)$, we find
that $\tz\in(n[\frac{1}{p_-}-\frac12],\fz)$.
Hence, \eqref{eq 4.y} holds true.
This finishes the proof of \eqref{eq-t2}.

By Lemma \ref{lem 4.1}, the proofs of \eqref{eq-t3} and \eqref{eq-t1} are totally similar to that
of \eqref{eq-t2}, the details being omitted.
Hence, we complete the proof of Theorem \ref{thm-4}.
\end{proof}

\noindent
{\bf Acknowledgments.}
Junqiang Zhang is very grateful to his advisor Professor Dachun Yang
for his guidance and encouragements.
Junqiang Zhang is
supported by the Fundamental Research Funds for the Central Universities
(Grant No. 2018QS01) and the National
Natural Science Foundation of China (Grant No. 11801555).
Zongguang Liu is supported by the National
Natural Science Foundation of China (Grant No. 11671397).

\bibliographystyle{amsplain}

\begin{thebibliography}{99}

\bibitem{adm05}
P. Auscher, X. T. Duong, and A. McIntosh,
\textit{Boundedness of Banach space valued singular integral operators and Hardy spaces},
unpublished manuscript, 2005.

\bibitem{at98}
P. Auscher and P. Tchamitchian,
\textit{Square root problem for divergence operators and related topics},
Ast\'{e}risque \textbf{249} (1998), viii+172 pp.

\bibitem{al91}
M. Avellaneda and F. Lin,
\textit{$L^p$ bounds on singular integrals in homogenization},
Comm. Pure Appl. Math. \textbf{44} (1991), no. 8-9, 897-910.

\bibitem{ccyy14}
J. Cao, D.-C. Chang, D. Yang, and S. Yang,
\textit{Boundedness of second order Riesz transforms associated to Schr\"{o}dinger
operators on Musielak-Orlicz-Hardy spaces},
Commun. Pure Appl. Anal. \textbf{13} (2014), no. 4, 1435-1463.

\bibitem{cf13}
D. V. Cruz-Uribe and A. Fiorenza,
\textit{Variable Lebesgue Spaces},
Foundations and Harmonic Analysis, Applied and Numerical Harmonic Analysis,
Birkh\"{a}user/Springer, Heidelberg, 2013, x+312 pp.

\bibitem{cfmp06}
D. Cruz-Uribe, A. Fiorenza, J. M. Martell, and C. P\'{e}rez,
\textit{The boundedness of classical operators on variable $L^p$ spaces},
Ann. Acad. Sci. Fenn. Math. \textbf{31} (2006), no. 1, 239-264.

\bibitem{cw14}
D. Cruz-Uribe and L.-A. D. Wang,
\textit{Variable Hardy spaces},
Indiana Univ. Math. J. \textbf{63} (2014), no. 2, 447-493.

\bibitem{dhhr11}
L. Diening, P. Harjulehto, P. H\"{a}st\"{o}, and M. R$\mathring{\rm u}$\v{z}i\v{c}ka,
\textit{Lebesgue and Sobolev Spaces with Variable Exponents},
Lecture Notes in Mathematics, 2017, Springer, Heidelberg, 2011.

\bibitem{dhmmy13}
X. T. Duong, S. Hofmann, D. Mitrea, M. Mitrea, and L. Yan,
\textit{Hardy spaces and regularity for the inhomogeneous Dirichlet and Neumann problems},
Rev. Mat. Iberoam. \textbf{29} (2013), no. 1, 183-236.

\bibitem{dy051}
X. T. Duong and L. Yan,
\textit{Duality of Hardy and BMO spaces associated with operators with heat kernel bounds},
J. Amer. Math. Soc. \textbf{18} (2005), no. 4, 943-973.

\bibitem{dz03}
J. Dziuba\'{n}ski and J. Zienkiewicz,
\textit{$H^p$ spaces associated with Schr\"{o}dinger operators
with potentials from reverse H\"{o}lder classes},
Colloq. Math. \textbf{98} (2003), no. 1, 5-38.

\bibitem{hlmmy11}
S. Hofmann, G. Lu, D. Mitrea, M. Mitrea, and L. Yan,
\textit{Hardy Spaces Associated to Non-negative Self-adjoint Operators
Satisfying Davies-Gaffney Estimates},
Mem. Amer. Math. Soc. \textbf{214} (2011), no. 1007, vi+78 pp.

\bibitem{hm09}
S. Hofmann and S. Mayboroda,
\textit{Hardy and BMO spaces associated to divergence form elliptic operators},
Math. Ann. \textbf{344} (2009), no. 1, 37-116.

\bibitem{jy11}
R. Jiang and D. Yang,
\textit{Orlicz-Hardy spaces associated with operators satisfying Davies-Gaffney estimates},
Commun. Contemp. Math. \textbf{13} (2011), no. 2, 331-373.

\bibitem{ks00}
K. Kurata and S. Sugano,
\textit{A remark on estimates for uniformly elliptic operators
on weighted $L^p$ spaces and Morrey spaces},
Math. Nachr. \textbf{209} (2000), 137-150.

\bibitem{ly14}
F. K. Ly,
\textit{Second order Riesz transforms associated to the Schr\"{o}dinger operator for $p\le 1$},
J. Math. Anal. Appl. \textbf{410} (2014), no. 1, 391-402.

\bibitem{ly16}
F. K. Ly,
\textit{Classes of weights and second order Riesz transforms associated
to Schr\"{o}dinger operators},
J. Math. Soc. Japan \textbf{68} (2016), no. 2, 489-533.

\bibitem{ns12}
E. Nakai and Y. Sawano,
\textit{Hardy spaces with variable exponents and generalized Campanato spaces},
J. Funct. Anal. \textbf{262} (2012), no. 9, 3665-3748.

\bibitem{or31}
W. Orlicz,
\textit{\"{U}ber konjugierte Exponentenfolgen},
Studia Math. \textbf{3} (1931), 200-211.

\bibitem{s13}
Y. Sawano,
\textit{Atomic decompositions of Hardy spaces with variable exponents
and its application to bounded linear operators},
Integral Equations Operator Theory \textbf{77} (2013), no. 1, 123-148.

\bibitem{sh}
Z. Shen,
\textit{$L^p$ estimates for Schr\"{o}dinger operators with certain potentials},
Ann. Inst. Fourier (Grenoble) \textbf{45} (1995), no. 2, 513-546.

\bibitem{s93}
E. M. Stein,
\textit{Harmonic Analysis: Real-Variable Methods, Orthogonality, and Oscillatory Integrals},
Princeton Mathematical Series, \textbf{43}, Monographs in Harmonic Analysis,
III, Princeton University Press, Princeton, NJ, 1993, xiv+695 pp.

\bibitem{t15}
L. Tang,
\textit{Weighted norm inequalities for Schr\"{o}dinger type operators},
Forum Math. \textbf{27} (2015), no. 4, 2491-2532.

\bibitem{y08}
L. Yan,
\textit{Classes of Hardy spaces associated with operators, duality theorem and applications},
Trans. Amer. Math. Soc. \textbf{360} (2008), no. 8, 4383-4408.

\bibitem{y14}
S. Yang,
\textit{Some estimates for Schr\"{o}dinger type operators on Musielak-Orlicz-Hardy spaces},
Taiwanese J. Math. \textbf{18} (2014), no. 4, 1293-1328.

\bibitem{yz17}
D. Yang and J. Zhang,
\textit{Variable Hardy spaces associated with operators satisfying Davies-Gaffney estimates
on metric measure spaces of homogeneous type},
Ann. Acad. Sci. Fenn. Math. \textbf{43} (2018), no. 1, 47-87.

\bibitem{yzz15}
D. Yang, J. Zhang, and C. Zhuo,
\textit{Variable Hardy spaces associated with operators
satisfying Davies-Gaffney estimates},
Proc. Edinb. Math. Soc. \textbf{61} (2018), no. 3, 759-810.

\bibitem{yzo15}
D. Yang and C. Zhuo,
\textit{Molecular characterizations and dualities of variable exponent Hardy spaces
associated with operators},
Ann. Acad. Sci. Fenn. Math. \textbf{41} (2016), no. 1, 357-398

\bibitem{z93}
J. Zhong,
\textit{Harmonic Analysis for Some Schr\"{o}dinger Type Operators},
Ph.D. Thesis, Princeton University, 1993.

\end{thebibliography}

\end{document}